%
%
%
%
\documentclass[12pt]{amsart}
\usepackage{amssymb,amsmath,amsthm,latexsym}
\usepackage{mathrsfs}
\usepackage{mdwlist}
\usepackage{graphicx}
\usepackage{a4wide}
\usepackage{chessfss}

\newcommand{\R}{\mathbb{R}}

\newcommand{\N}{\mathbb{N}}

\newcommand{\C}{\mathbb{C}}

\newcommand{\p}{\varphi}
\newcommand{\e}{\varepsilon}

\newcommand{\oo}{\overline}
\newcommand{\om}{\omega}

\newcommand{\ind}{\boldsymbol{1}}
\newcommand{\n}[1]{\|#1\|}

\newcommand{\supp}{\mathrm{supp}}
\newcommand{\ccup}{\scalebox{0.85}{$\bigcup$}}

\newcommand{\Ca}{\mathscr{C}}

\newtheorem{theorem}{Theorem}[section]
\newtheorem{lemma}[theorem]{Lemma}

\newtheorem{proposition}[theorem]{Proposition}
\newtheorem{corollary}[theorem]{Corollary}

\theoremstyle{definition}

\newtheorem{example}[theorem]{Example}
\newtheorem{remark}[theorem]{Remark}
\newtheorem{question}[theorem]{Question}

\theoremstyle{remark}

\numberwithin{equation}{section}

\newcommand{\abs}[1]{\lvert#1\rvert}


\newcounter{smallromans}

\newenvironment{romanenumerate}
{\begin{list}{{\normalfont\textrm{(\roman{smallromans})}}}%
    {\usecounter{smallromans}\setlength{\itemindent}{0cm}%
      \setlength{\leftmargin}{5.5ex}\setlength{\labelwidth}{5.5ex}%
      \setlength{\topsep}{0.75\parsep}\setlength{\partopsep}{0ex}%
      \setlength{\itemsep}{0ex}}}%
  {\end{list}}

\newcounter{smallalphs}

  {\end{list}}

\hyphenation{sa-tis-fying  Banach}

\begin{document}
\title{A chain condition for operators from $C(K)$-spaces}
\subjclass[2010]{Primary 46B50, 46E15, 37F20; Secondary 54F05, 46B26}
\author[Hart, Kania and Kochanek]{Klaas Pieter Hart, Tomasz Kania and Tomasz Kochanek}
\address{Faculty of Electrical Engineering, Mathematics and Computer Science\\
         TU Delft\\
         Postbus 5031\\
         2600~GA {} Delft\\
         the Netherlands}
\email{k.p.hart@tudelft.nl}

\address{Department of Mathematics and Statistics, Fylde College,
  Lancaster University, Lancaster LA1 4YF, United Kingdom}
\email{t.kania@lancaster.ac.uk}

\address{Institute of Mathematics, University of Silesia, Bankowa 14, 40-007 Katowice, Poland}
\email{tkochanek@math.us.edu.pl}

\keywords{Weakly compact operator, chain condition, extremally disconnected space, vector measure, Rosenthal's lemma, partition lemma}

\begin{abstract}
We introduce a chain condition ($\symbishop$), defined for operators
acting on $C(K)$-spaces, which is intermediate between weak
compactness and having weakly compactly generated range. It is
motivated by Pe\l czy\'nski's characterisation of weakly compact
operators on $C(K)$-spaces. We prove that if $K$ is extremally
disconnected and $X$ is a~Banach space then, for an~operator
$T\colon C(K)\to X$, $T$ is weakly compact if and only if $T$
satisfies ($\symbishop$) if and only if the representing vector
measure of $T$ satisfies an~analogous chain condition. As a~tool for proving the above-mentioned result, we derive a~topological counterpart of Rosenthal's lemma. We exhibit
several compact Hausdorff spaces $K$ for which the identity operator
on $C(K)$ satisfies ($\symbishop$), for example both locally
connected compact spaces having countable cellularity~and ladder
system spaces have this property. Using a~Ramsey-type theorem,
due to Dushnik and Miller, we prove that the collection of operators
on a~$C(K)$-space satisfying {\rm($\symbishop$)} forms a~closed left
ideal of $\mathscr{B}(C(K))$.\end{abstract}

\maketitle

\section{Introduction}
The aim of this paper is to study a~certain chain condition, denoted {\rm($\symbishop$)}, defined for (bounded, linear) operators $T\colon C(K)\to X$, where $K$ is a~compact Hausdorff space, $X$ is an~arbitrary Banach space and $C(K)$ is the Banach space of all scalar-valued continuous functions on $K$, equipped with the supremum norm. The main motivation behind our work comes from a~theorem of Pe\l czy\'{n}ski which asserts that an~operator $T\colon C(K)\to X$ is weakly compact if and only if there is no isomorphic copy of $c_0$ in $C(K)$ on which $T$ is bounded below. In other words, $T$ fails to be weakly compact provided there is a~sequence of pairwise disjoint open sets $\{O_n\}_{n=1}^\infty$ in $K$ and a~uniformly bounded family of functions $\{f_n\}_{n=1}^\infty$ in $C(K)$ such that the support of $f_n$ is contained in $O_n$ ($n\in \N$) and $\inf_n \|T(f_n)\|>0$. Loosely speaking, Pe\l czy\'nski's theorem characterises weakly compact operators from $C(K)$-spaces as precisely those which annihilate the sequences of disjointly supported and uniformly bounded functions in $C(K)$. Our condition ($\symbishop$) is similar in nature to this requirement.

Let $K$ be a compact Hausdorff space and $X$ be a~Banach space. The \emph{support}, $\supp(f)$, of a~function $f\in C(K)$ is defined as
the closure of the set $\{x\in K\colon f(x)\neq 0\}$. For any~pair
of functions $f,g\in C(K)$ we write $f\prec g$ whenever $f\not=g$
and $f(x)=g(x)$ for each $x\in\supp(f)$. The relation $\prec$ is
a~strict ordering on $C(K)$. For a~bounded and linear operator
$T\colon C(K)\to X$, the chain condition ($\symbishop$) is defined
as follows:
\begin{itemize*}
\item[($\symbishop$)]
for each uncountable $\prec$-chain $F$ in $C(K)$ with $\sup_{f\in F}\n{f}<\infty$,
$$\inf\{\n{Tf - Tg}\colon f,g\in F, f\neq g\}=0.$$
\end{itemize*}
Any uniformly bounded $\prec$-chain $\{f_i\}_{i\in I}$ such that
$\n{f_i-f_j}\geqslant\delta$ for $i,j\in I$, $i\neq j$, and some
positive $\delta$, will be called a~$\delta$-$\prec$-{\it chain}. We
also say that a~compact Hausdorff space $K$ satisfies ($\symbishop$)
whenever the identity operator on $C(K)$ satisfies ($\symbishop$),
equivalently, for each $\delta>0$ there is no uncountable
$\delta$-$\prec$-chain in $C(K)$.

We start Section 2 with the observations that, for each operator
$T\colon C(K)\to X$, the condition {\rm($\symbishop$)} is weaker
than weak compactness (Proposition \ref{wcompactisbishop}), and
that these two properties coincide provided that $K$ is extremally
disconnected (i.e. open sets have open closures), in which case weak
compactness may also be characterised by a~counterpart of 
condition ($\symbishop$) for the representing measure of $T$
(Theorem \ref{bishop_rook}). This result is obtained by refining
a~classical lemma due to Rosenthal to the extremally disconnected
setting (Lemma \ref{rosenthal}) and by applying some vector-measure
techniques (Proposition \ref{measure_1}).

In Section 3 we give examples of compact Hausdorff spaces that
satisfy {\rm($\symbishop$)}. They include compact Hausdorf spaces
which are countable, one-point compactifications of discrete sets
(Proposition \ref{thm:omegaD}),
 compact Hausdorff spaces which are locally connected and have
 countable cellularity (Theorem \ref{souslin}) and ladder system
spaces (Corollary \ref{ladder}).

Like other classical chain conditions for topological spaces or
Boolean algebras, this one carries some combinatorial insight. Using
Ramsey-theoretic techniques we prove that the family of operators on
a given $C(K)$-space satisfying {\rm($\symbishop$)} forms a closed
left ideal of the Banach algebra $\mathscr{B}(C(K))$ of all
operators on $C(K)$ (Theorem \ref{left_ideal}).

All Banach spaces are assumed to be over either real or complex
scalars. By an \emph{operator} we understand a bounded linear
operator acting between Banach spaces. An operator $T\colon E\to F$
is \emph{bounded below} whenever there exists $\gamma>0$ such that
$\|Tx\|\geqslant \gamma\|x\|$ for each $x\in E$; an operator which
is bounded below is injective and has closed range. An operator
$T\colon E\to F$ \emph{fixes a copy} of a Banach space $X$, if
there is a subspace $E_0$ of $E$ isomorphic to $X$ such that
$T|_{E_0}$ is bounded below. The identity operator on a~Banach space
$X$ is denoted $I_X$. For a topological space $K$, we denote its
cardinal number by $|K|$, and define the \emph{cellularity} of $K$, $c(K)$,
to be the supremum of the cardinalities of families consisting of
pairwise disjoint open sets in $K$. A topological space with
countable cellularity is said to satisfy \emph{c.c.c}. The
symbol $\mathbf{1}_A$ denotes the indicator function of a set $A$.

\section{{\rm($\symbishop$)} and weakly compact operators from $C(K)$-spaces}
By a classical result of Pe\l czy\'nski (see, \emph{e.g.}, \cite[Theorem VI.2.15]{diestel_uhl}), every non-weakly compact operator from a~$C(K)$-space into an~arbitrary Banach space $X$ fixes a copy of $c_0$. In the case where $K$ is extremally disconnected the Goodner--Nachbin theorem (\cite{goodner}, \cite{nachbin}) says that $C(K)$ is (isometrically) injective, and then Rosenthal's theorem \cite[Theorem 1.3]{rosenthal} asserts that any non-weakly compact operator $T\colon C(K)\to X$ fixes a copy of $\ell_\infty$. The main result of this section, Theorem \ref{bishop_rook}, is in the spirit of these facts. It asserts that weak compactness of an operator from a~$C(K)$-space, for $K$ extremally disconnected, may be characterised in terms of the chain conditions, imposed both upon the operator itself and its representing measure.

Let us begin with the following refinement of Pe\l czy\'nski's theorem.
\begin{proposition}\label{wcompactisbishop}Let $K$ be a compact Hausdorff space, $X$ be a Banach space and let $T\colon C(K)\to X$ be an operator. Then the following assertions are equivalent
\begin{romanenumerate}
\item\label{wcompactpishopi} $T$ is weakly compact;
\item\label{wcompactpishopii} for each infinite $\prec$-chain $F\subseteq C(K)$, $$\inf\{\|Tf-Tg\|\colon f,g\in F, f\neq g\}=0.$$
\end{romanenumerate}
In particular, every weakly compact operator $T\colon C(K)\to X$ satisfies ${\rm (\symbishop)}$.\end{proposition}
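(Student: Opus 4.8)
The plan is to establish the equivalence of \romanref{wcompactpishopi} and \romanref{wcompactpishopii} by contraposition in both directions, using only Pe\l czy\'nski's characterisation of weakly compact operators as external input. The final assertion is then immediate: an uncountable $\prec$-chain is in particular infinite, so \romanref{wcompactpishopii} applied to it forces the infimum appearing in {\rm($\symbishop$)} to vanish (the uniform-boundedness hypothesis in {\rm($\symbishop$)} is not even needed here).

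For \romanref{wcompactpishopii}$\Rightarrow$\romanref{wcompactpishopi} I would argue contrapositively. If $T$ is not weakly compact, Pe\l czy\'nski's theorem supplies pairwise disjoint open sets $\{O_n\}$ and a uniformly bounded sequence $\{f_n\}$ with $\supp(f_n)\subseteq O_n$, spanning a copy of $c_0$ on which $T$ is bounded below, say by $\gamma>0$; normalising, we may assume $\inf_k\n{f_k}_\infty>0$ (which follows anyway from $\inf_k\n{Tf_k}>0$). Putting $g_n=f_1+\dots+f_n$, the disjointness of supports gives $g_n=g_{n+1}$ on $\supp(g_n)$ and $g_n\neq g_{n+1}$, so $\{g_n\}$ is an infinite $\prec$-chain, uniformly bounded because the summands are disjointly supported. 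For $m>n$ we have $g_m-g_n=f_{n+1}+\dots+f_m$, whence $\n{Tg_m-Tg_n}\geqslant\gamma\n{f_{n+1}+\dots+f_m}_\infty\geqslant\gamma\inf_k\n{f_k}_\infty>0$, so the infimum in \romanref{wcompactpishopii} is strictly positive and \romanref{wcompactpishopii} fails.

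For \romanref{wcompactpishopi}$\Rightarrow$\romanref{wcompactpishopii}, again by contraposition, suppose some infinite $\prec$-chain $F$ satisfies $\inf\{\n{Tf-Tg}\}=\delta>0$. An infinite linear order contains a strictly monotone $\omega$-sequence (extract a countable subchain and apply Ramsey's theorem to the pairs coloured by $\prec$), so I may fix $f_1\prec f_2\prec\cdots$ in $F$, the decreasing case being symmetric. Set $h_n=f_{n+1}-f_n$, so that $\n{Th_n}\geqslant\delta$. The key combinatorial point is that $f_n\prec f_{n+1}$ forces $\{f_n\neq0\}\subseteq\{f_{n+1}\neq0\}$, hence $\supp(f_n)\subseteq\supp(f_{n+1})$; consequently $\{h_n\neq0\}\subseteq\{f_{n+1}\neq0\}$ gives $\supp(h_n)\subseteq\supp(f_{n+1})$, while for $m>n$ the function $h_m$ vanishes on $\supp(f_m)\supseteq\supp(f_{n+1})\supseteq\supp(h_n)$. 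Thus the cozero sets $\{h_n\neq0\}$ are pairwise disjoint, so $\n{\sum a_nh_n}_\infty=\sup_n\abs{a_n}\n{h_n}_\infty$ for finitely supported scalars; since the $h_n$ are uniformly bounded with $\n{h_n}_\infty\geqslant\n{Th_n}/\n{T}\geqslant\delta/\n{T}>0$, the map $e_n\mapsto h_n$ extends to a bounded operator $c_0\to C(K)$, so $\{h_n\}$ spans a copy of $c_0$ and, as the image of the weakly null unit vector basis of $c_0$, is weakly null.

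It then remains to contradict weak compactness of $T$ using $\n{Th_n}\geqslant\delta$, and this is the only genuinely delicate passage once the support bookkeeping above is in place. Here I would invoke the Dunford--Pettis property of $C(K)$: a weakly compact operator on a space with this property is completely continuous, hence maps the weakly null sequence $\{h_n\}$ to a norm-null one, contradicting $\n{Th_n}\geqslant\delta$. (Alternatively one may re-feed $\{h_n\}$ into Pe\l czy\'nski's theorem after thinning to a subsequence on which $T$ is bounded below.) The main obstacle is therefore not the functional analysis but the verification that the successive differences of a monotone $\prec$-chain are honestly disjointly supported, since it is exactly this disjointness that manufactures the copy of $c_0$; both implications then reduce to the cited characterisations.
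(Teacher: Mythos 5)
Your argument is essentially the paper's: for \romanref{wcompactpishopii}$\Rightarrow$\romanref{wcompactpishopi} both proofs take the partial sums of Pe\l czy\'nski's disjointly supported sequence as the $\prec$-chain, and for the converse both pass to a $\prec$-monotone subsequence and turn its successive differences into a disjointly supported sequence spanning a copy of $c_0$ that $T$ fails to annihilate. You merely supply details the paper leaves implicit (Ramsey's theorem to extract the monotone subsequence, the support bookkeeping, the Dunford--Pettis property for the final contradiction, and the explicit use of $T$ being bounded below on the $c_0$-copy rather than just $\inf_n\n{Tf_n}>0$), so the proposal is correct and takes the same route.

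One caveat, which your write-up shares with the paper's own proof: in \romanref{wcompactpishopi}$\Rightarrow$\romanref{wcompactpishopii} you assert that the differences $h_n$ are uniformly bounded, but clause \romanref{wcompactpishopii} as literally stated does not assume the chain $F$ to be uniformly bounded, and without that hypothesis the step (and indeed the implication) fails. For instance, let $h_n$ be a continuous bump supported in $[1-2^{-n},1-2^{-n-1}]$, vanishing at the endpoints, with $\int_0^1 h_n\,\mathrm{d}x=1$ (so $\n{h_n}_\infty\to\infty$); then $F=\bigl\{\sum_{k<n}h_k\bigr\}_{n\geqslant1}$ is an infinite $\prec$-chain in $C[0,1]$ on which the rank-one (hence weakly compact) operator $Tf=\int_0^1 f\,\mathrm{d}x$ satisfies $\n{Tf-Tg}\geqslant1$ for distinct $f,g\in F$. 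So \romanref{wcompactpishopii} must be read with ``uniformly bounded'' added, as in the definition of {\rm($\symbishop$)} --- which is all the final assertion requires, and is clearly the intended reading --- but you should say explicitly that this is where the boundedness of the $h_n$ comes from.
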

\begin{proof}
For the implication (\ref{wcompactpishopi}) $\Rightarrow$
(\ref{wcompactpishopii}) assume contrapositively that
$$\delta=\inf\{\|Tf-Tg\|\colon f,g\in F, f\neq g\}>0.$$ Let $F_0=(f_n)_{n\in \mathbb{N}}$ be a $\prec$-monotone subsequence of $F$. For each $n\in \mathbb N$ set $g_n = |f_{n+1}-f_n|$. Then $Y=\overline{\mbox{span}}\{g_n\colon n\in \mathbb{N}\}$ is an isomorphic copy of $c_0$ and $\|Tg_n\|>\delta$ ($n\in \mathbb{N}$). Consequently, $T$ is bounded below on $Y$. So $T$ cannot be weakly compact.

To prove (\ref{wcompactpishopii}) $\Rightarrow$
(\ref{wcompactpishopi})
 assume, for a contradiction, that
  $T$ is not weakly compact. Appealing to the proof of \cite[Theorem
VI.2.15]{diestel_uhl}, we may construct a sequence
$(f_n)_{n=1}^\infty$ of disjointly supported functions in $C(K)$ of
norm at most one such that $\inf_{n\in \mathbb{N}}\|Tf_n\|>0$. The
family $F=\big\{\sum_{k=1}^n f_k\big\}_{n=1}^\infty$ is then a
uniformly bounded infinite $\prec$-chain for which
$\inf\{\|Tf-Tg\|\colon f,g\in F, f\neq g\}>0$.\end{proof}

Given a compact Hausdorff space $K$ and a Banach space $X$, every
operator $T\colon C(K)\to X$ admits a~Riesz-type representation (\emph{cf}.~\cite[Chapter 6]{diestel_uhl}) in the following precise
sense: there exists a~$w^\ast$-countably additive vector measure
$\mu\colon\Sigma\to X^{\ast\ast}$ (called the {\it representing
measure} for $T$) on the $\sigma$-algebra $\Sigma$ of all Borel
subsets of $K$ such that:
\begin{romanenumerate}
\item for each $x^\ast\in X^\ast$ the map $\Sigma\ni A\mapsto \mu(A)x^\ast$ is a~regular countably additive scalar measure (and will be denoted  $x^\ast\circ\mu$);
\item the map $x^\ast\mapsto x^\ast\circ\mu$ from $X^\ast$ into $C(K)^\ast$ is $w^\ast$-to-$w^\ast$ continuous;
\item $x^\ast T(f)=\int_K f\,\mathrm{d}(x^\ast\circ\mu)$ for each $x^\ast\in X^\ast$ and $f\in C(K)$;
\item $\n{T}=\n{\mu}(K)$.
\end{romanenumerate}

The representing measure $\mu$ may be expressed explicitly by the formula $\mu(A)=T^{\ast\ast}\p_A$, where $\p_A\in C(K)^{\ast\ast}$ acts as $\p_A(\nu)=\nu(A)$ ($A\in \Sigma$, $\nu\in C(K)^\ast$). Equivalently, it may be defined by the prescription $\mu(A)x^\ast=\mu_{x^\ast}(A)$, where $\mu_{x^\ast}=T^\ast x^\ast$ is the scalar measure produced by the Riesz theorem applied to the functional $x^\ast T$.

We will need the following topological counterpart of Rosenthal's
lemma (\emph{cf}.~\cite[Lemma I.4.1]{diestel_uhl}). Although its
proof is almost the same as the original one, we present it for
completeness and to demonstrate the r\^{o}le played by extremal disconnectedness.

\begin{lemma}\label{rosenthal}
Let $K$ be a compact Hausdorff space which is extremally disconnected, 
  and let $(V_n)_{n=1}^\infty$ be a~sequence of pairwise
disjoint open subsets of $K$. Suppose that $(\mu_n)_{n=1}^\infty$ is
a~sequence of scalar Borel measures on $K$ having 
uniformly bounded variations. Then, for every $\e>0$ there exists
a~strictly increasing sequence $(n_k)_{k=1}^\infty$ of natural
numbers such that, for each $k\in \N$,
$$\abs{\mu_{n_k}}\Bigg(\oo{\bigcup_{j\neq k}V_{n_j}}\Bigg)<\e.$$
\end{lemma}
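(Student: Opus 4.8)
The plan is to reduce to the case of clopen sets and then run the classical argument, using extremal disconnectedness to pass between open unions and their closures. First I would replace each $V_n$ by its closure $\oo{V_n}$: since $K$ is extremally disconnected, $\oo{V_n}$ is clopen, and disjoint open sets have disjoint closures (if $U\cap V=\emptyset$ with $U,V$ open then $\oo V\cap U=\emptyset$, so $U\subseteq K\setminus\oo V$, a clopen set, whence $\oo U\cap\oo V=\emptyset$); hence the $\oo{V_n}$ remain pairwise disjoint. Moreover $\bigcup_j\oo{V_{n_j}}\subseteq\oo{\bigcup_j V_{n_j}}$, so $\oo{\bigcup_j \oo{V_{n_j}}}=\oo{\bigcup_j V_{n_j}}$, and replacing $V_n$ by $\oo{V_n}$ alters none of the sets in the conclusion. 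Thus I may assume each $V_n$ is clopen. Writing $V_M=\bigcup_{n\in M}V_n$ for $M\subseteq\N$, the crucial consequence is that a \emph{finite} union of the $V_n$ is again clopen, hence closed; so if $M=\{n_1<n_2<\cdots\}$ then
\[
\oo{V_{M\setminus\{n_k\}}}=V_{\{n_j:\,j<k\}}\cup\oo{V_{\{n_j:\,j>k\}}},
\]
where the backward term is clopen and only the forward tail can contribute boundary points. Consequently
\[
\abs{\mu_{n_k}}\bigl(\oo{V_{M\setminus\{n_k\}}}\bigr)\leqslant\sum_{j<k}\abs{\mu_{n_k}}(V_{n_j})+\abs{\mu_{n_k}}\bigl(\oo{V_{\{n_j:\,j>k\}}}\bigr),
\]
and it suffices to make each summand on the right smaller than $\e/2$.

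The backward sums are handled by the classical Rosenthal lemma. Since the variations $\abs{\mu_n}$ are uniformly bounded and the clopen sets $V_n$ are pairwise disjoint Borel sets, \cite[Lemma I.4.1]{diestel_uhl} produces an infinite $M_0\subseteq\N$ with $\sum_{m\in M_0\setminus\{n\}}\abs{\mu_n}(V_m)<\e/2$ for every $n\in M_0$, a bound that persists on passing to subsets of $M_0$. The forward tails I would handle by a regularity argument for one measure at a time, via the following tool: given a finite regular Borel measure $\lambda$ and an infinite $R\subseteq M_0$, there is an infinite $R'\subseteq R$ with $\lambda(\oo{V_{R'}})$ arbitrarily small. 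Indeed $\lambda(\oo{V_{R'}})=\sum_{m\in R'}\lambda(V_m)+\lambda(D_{R'})$, where $D_{R'}=\oo{V_{R'}}\setminus V_{R'}$ is the set of cluster points of $(V_m)_{m\in R'}$; the first term is controlled by taking $R'$ in a tail (as $\sum_m\lambda(V_m)\leqslant\lambda(K)<\infty$), while the cluster term is driven to $0$ by halving: splitting $R'$ into two infinite pieces $A\sqcup B$ gives $\oo{V_A}\cap\oo{V_B}=\emptyset$, so $D_A$ and $D_B$ are disjoint and $\lambda(D_A)+\lambda(D_B)=\lambda(D_{R'})$, whence one half carries at most half the cluster mass; iterating and diagonalising yields $R'$ with $\lambda(D_{R'})=0$. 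With this tool I would build $M=\{n_1<n_2<\cdots\}\subseteq M_0$ recursively: having chosen $n_k$ and an infinite reservoir $R_{k-1}\subseteq M_0$, I apply the tool to $\lambda=\abs{\mu_{n_k}}$ to select an infinite $R_k\subseteq R_{k-1}\cap(n_k,\infty)$ with $\abs{\mu_{n_k}}(\oo{V_{R_k}})<\e/2$, and set $n_{k+1}=\min R_k$. Since $\{n_j:j>k\}\subseteq R_k$, the forward tail is then $<\e/2$ for $\mu_{n_k}$, while membership in $M_0$ keeps the backward sum $<\e/2$; adding the two estimates gives the claim.

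The main obstacle is precisely the passage from the open union $V_{M\setminus\{n_k\}}$, which the classical lemma controls, to its closure: the excess mass lives on the cluster set of the chosen subsequence and can be genuinely positive (for instance $\mu_n=\delta_p$ with $p$ a free ultrafilter on $K=\beta\N$ and $V_n=\{n\}$ shows that a careless choice of subsequence leaves the closure with full mass). Extremal disconnectedness is exactly what makes this tractable: it turns the closures into clopen sets, forces disjoint open sets to have disjoint closures, and thereby makes the cluster set split additively under halving, which is what allows the cluster mass to be annihilated subsequence by subsequence. I expect the single-measure halving step—especially the additivity $\lambda(D_{R'})=\lambda(D_A)+\lambda(D_B)$ and the diagonalisation yielding $\lambda(D_{R'})=0$—to demand the most care, since it is there that extremal disconnectedness and the regularity of the representing measures enter in an essential way.
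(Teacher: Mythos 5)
Your argument is correct, but it takes a genuinely different route from the paper's. The paper neither reduces to clopen sets nor invokes the classical Rosenthal lemma as a black box; it reruns the classical exhaustion argument directly on the closures: $\N$ is split into infinitely many infinite pieces $M_p$, and if no piece already satisfies the conclusion, one extracts witnesses $k_p\in M_p$ with $\abs{\mu_{k_p}}\bigl(\oo{\bigcup_{j\in M_p,\,j\neq k_p}V_j}\bigr)\geqslant\e$; since $\oo{\bigcup_q V_{k_q}}$ is open (this is where extremal disconnectedness enters there), the set $\oo{\bigcup_n V_n}\setminus\oo{\bigcup_q V_{k_q}}$ is closed and contains each $\oo{\bigcup_{j\in M_p,\,j\neq k_p}V_j}$, forcing $\abs{\mu_{k_p}}\bigl(\oo{\bigcup_q V_{k_q}}\bigr)\leqslant 1-\e$; the argument then recurses on the clopen subspace $\oo{\bigcup_q V_{k_q}}$ with the diagonal subsequence, and the recursion must terminate because the available mass drops by $\e$ at each stage. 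Your proof instead isolates the genuinely new difficulty --- the mass carried by the cluster set $D_R=\oo{V_R}\setminus V_R$ --- and annihilates it by the halving argument, delegating the open part to the classical lemma. Both are sound. The paper's version uses only finite additivity of the variations and so works verbatim for bounded finitely additive measures, whereas your identity $\lambda(V_{R'})=\sum_{m\in R'}\lambda(V_m)$ needs countable additivity (harmless here, since the measures that arise are regular and countably additive); in exchange, your version quotes the classical lemma rather than reproving it and makes explicit where the excess mass over the open union lives. One step to write out with care is the diagonalisation yielding $\lambda(D_{R'})=0$: you need that $D_S\subseteq D_R$ whenever $S\subseteq R$ (true because each $V_m$ is clopen after your reduction) and that finite modifications of $S$ do not change $D_S$; alternatively, finitely many halvings already force $\lambda(D_{R'})$ below any prescribed threshold, which suffices for the estimate.
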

\begin{proof}
Let us suppose, with no loss of generality, that
$\abs{\mu_n}(K)\leqslant 1$ for each $n\in\N$. Consider any sequence
$(M_p)_{p=1}^\infty$ of pairwise disjoint infinite subsets of $\N$
such that $\N=\ccup_pM_p$. We consider two cases.

{\it Case 1.} First, if there is some $p\in\N$ for which
$$\abs{\mu_k}\Bigg(\oo{\bigcup_{\substack{j\in M_p\\ j\neq
k}}V_j}\Bigg)<\e\quad\mbox{for each }k\in M_p,$$then we get the
assertion from the induced enumeration of  $M_p$: $\{n_1<n_2< \cdots\}$. 

{\it Case 2.} Now suppose that for every $p\in\N$ there is $k_p\in M_p$ such that
\begin{equation}\label{R1}
\abs{\mu_{k_p}}\Bigg(\oo{\bigcup_{\substack{j\in M_p\\ j\neq k_p}}V_j}\Bigg)\geqslant\e.
\end{equation}
Fix, for a moment, any $p\in\N$. Since $\ccup_q V_{k_q}$ is disjoint from the open set $\ccup_{j\in M_p, j\neq k_p}V_j$, so is its closure. Hence,
\begin{equation}\label{R2}
\bigcup_{j\in M_p, j\neq k_p}V_j\subset\bigcup_{n\in\N} V_n\setminus\oo{\bigcup_{q\in\N} V_{k_q}}.
\end{equation}
Observe that $\oo{\ccup_qV_{k_q}}$ is open, as $K$ is extremally disconnected, whence $\oo{\ccup_nV_n}\setminus\oo{\ccup_qV_{k_q}}$ is closed. Therefore, by \eqref{R2}, we get
\begin{equation}\label{R3}
\oo{\bigcup_{\substack{j\in M_p\\ j\neq k_p}}V_j}\subset\oo{\bigcup_{n\in\N} V_n}\setminus\oo{\bigcup_{q\in\N}V_{k_q}}.
\end{equation}
Obviously, we have $$\abs{\mu_{k_p}}\Bigg(\oo{\bigcup_{q\in\N}V_{k_q}}\Bigg)+\abs{\mu_{k_p}}\Bigg(\oo{\bigcup_{n\in\N} V_n}\setminus\oo{\bigcup_{q\in\N}V_{k_q}}\Bigg)\leqslant 1,$$
so \eqref{R3} and \eqref{R1} imply $$\abs{\mu_{k_p}}\Bigg(\oo{\bigcup_{q\in\N}V_{k_q}}\Bigg)\leqslant 1-\e,$$and this inequality is valid for every $p\in\N$.

Consequently, we may repeat the same argument replacing the
space $K$ with the clopen subspace $\oo{\bigcup_{q\in\N}V_{k_q}}$
(which is extremally disconnected as this property is inherited by
open subspaces) and the sequences $(\mu_n)_{n=1}^\infty$ and
$(V_n)_{n=1}^\infty$ with $(\mu_{k_p})_{p=1}^\infty$ and
$(V_{k_p})_{p=1}^\infty$, respectively. By continuing this we would
get subsequent upper bounds $1-2\e, 1-3\e,\ldots$  for some of the
variations $\abs{\mu_n}$. Since this process has to terminate, we
will end up with {\it Case 1}, where the assertion has been proved.
\end{proof}

\begin{proposition}\label{bishopwc}
Let $K$ be a compact Hausdorff space which is extremally disconnected, 
  and let $X$ be a~Banach space.
Every operator from $C(K)$ into $X$ which satisfies {\rm
($\symbishop$)} is weakly compact.
\end{proposition}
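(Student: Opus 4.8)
The plan is to prove the contrapositive: assuming $T$ is not weakly compact, I want to construct an uncountable (in fact, it will suffice to have it be a particular structured) $\delta$-$\prec$-chain $F$ witnessing the failure of ($\symbishop$). By Proposition \ref{wcompactisbishop} we already know ($\symbishop$) is implied by weak compactness and that the failure of weak compactness yields, via Pe\l czy\'nski's theorem, a sequence $(f_n)_{n=1}^\infty$ of disjointly supported, uniformly bounded functions in $C(K)$ with $\inf_n\n{Tf_n}>0$. The subtlety is that Proposition \ref{wcompactisbishop} only produces an \emph{infinite} chain, whereas ($\symbishop$) concerns \emph{uncountable} chains; so the content here is that, under extremal disconnectedness, the gap between these two conditions disappears.

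The key device will be Lemma \ref{rosenthal}. First I would replace the supports of the $f_n$ by pairwise disjoint open sets $V_n$ (one may take $V_n$ to be an open set containing $\{x : f_n(x)\neq 0\}$ with the $V_n$ disjoint). Since $T$ has a representing measure $\mu\colon\Sigma\to X^{\ast\ast}$ with $x^\ast T(f)=\int_K f\,\mathrm{d}(x^\ast\circ\mu)$, and since $\inf_n\n{Tf_n}>0$, for each $n$ I can choose a norming functional $x_n^\ast$ in the unit ball of $X^\ast$ so that $\abs{x_n^\ast T(f_n)}$ stays bounded away from $0$; the associated scalar measures $\mu_n := x_n^\ast\circ\mu$ then have uniformly bounded variation (bounded by $\n{T}$). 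Applying Lemma \ref{rosenthal} to $(V_n)$ and $(\mu_n)$ with a suitably small $\e$, I pass to a subsequence along which each $\abs{\mu_{n_k}}$ assigns mass less than $\e$ to the closure of the union of the \emph{other} $V_{n_j}$. This near-orthogonality is exactly what lets the partial sums behave like an $\ell_\infty$-type chain rather than merely a $c_0$-type sequence.

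The heart of the construction is then to build an \emph{uncountable} $\prec$-chain. Having thinned out to a sequence with the Rosenthal separation property, I would index functions by a suitable uncountable $\prec$-chain of subsets — concretely, choosing an uncountable chain $(A_\alpha)_{\alpha}$ of subsets of $\N$ under almost-inclusion (or indeed any uncountable linearly ordered family of infinite subsets of $\N$, which exists in ZFC) and setting $F_\alpha := \sum_{k\in A_\alpha} f_{n_k}$. Because the $f_{n_k}$ are disjointly supported and uniformly bounded, each such infinite sum converges in $C(K)$ (extremal disconnectedness guarantees that the characteristic-function-like sums and their supports behave well, and the disjoint supports keep the sup norm controlled), and $F_\alpha\prec F_\beta$ whenever $A_\alpha\subsetneq A_\beta$ on the relevant tail. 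For two comparable members $F_\alpha\prec F_\beta$ with, say, $k_0\in A_\beta\setminus A_\alpha$, the Rosenthal estimate forces $\n{T F_\beta - T F_\alpha}\geqslant \abs{x_{n_{k_0}}^\ast(TF_\beta-TF_\alpha)}$ to be bounded below by roughly $\abs{x_{n_{k_0}}^\ast T(f_{n_{k_0}})} - \e$, a quantity uniformly positive by the choice of $\e$. Thus $F=\{F_\alpha\}$ is an uncountable $\delta$-$\prec$-chain, contradicting ($\symbishop$).

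The main obstacle I anticipate is organising the chain so that comparability of its members reduces cleanly to the Rosenthal separation on a single index: one must ensure that when passing from $F_\alpha$ to $F_\beta$ the witnessing functional $x_{n_{k_0}}^\ast$ sees the newly added block $f_{n_{k_0}}$ with nearly its full mass while seeing the rest with mass at most $\e$. This requires the $\prec$-chain of index sets to be linearly ordered by genuine inclusion (not merely almost-inclusion) after a final refinement, and it requires care that each $F_\alpha$ is a legitimate element of $C(K)$ with $\supp(F_\alpha)$ contained in $\oo{\bigcup_{k\in A_\alpha}V_{n_k}}$ so that the relation $\prec$ is correctly verified — this is precisely where extremal disconnectedness is used, to guarantee the relevant unions have the clopen structure making the sums continuous and the supports nested. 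Once this bookkeeping is set up, the norm estimate is the routine part.
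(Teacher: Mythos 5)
Your overall strategy coincides with the paper's: argue contrapositively, extract from non-weak compactness a sequence of pairwise disjoint open sets and norming functionals via the representing measure, thin out using the topological Rosenthal lemma (Lemma \ref{rosenthal}), and then convert an uncountable inclusion-chain of subsets of $\N$ into an uncountable $\delta$-$\prec$-chain whose separation is certified by a single functional per comparison. However, there is a genuine gap at the central step: the functions $F_\alpha=\sum_{k\in A_\alpha}f_{n_k}$ are not legitimate elements of $C(K)$. The series cannot converge in norm (its terms are disjointly supported functions whose norms are bounded away from zero, since $\inf_n\n{Tf_n}>0$ forces $\inf_n\n{f_n}>0$, so the partial sums are not Cauchy), and the pointwise sum need not be continuous. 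For instance, on $K=\beta\N$ take disjoint infinite sets $W_n\subseteq\N$ and $f_n=\mathbf{1}_{\overline{W_n}}$: the pointwise sum vanishes at every ultrafilter lying in $\overline{\bigcup_n W_n}$ but in no $\overline{W_n}$, while it is identically $1$ on the dense subset $\bigcup_n W_n$ of that clopen set. Extremal disconnectedness does not make such sums continuous; what it provides is that closures of open sets are clopen (and that disjoint open sets have disjoint closures), and exploiting this requires replacing the sums by different objects.

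That replacement is precisely what the paper does, and it is the one idea your sketch is missing. Rather than summing the $f_{n_k}$, the paper first uses regularity of the measures $x_n^\ast\circ\mu$ and extremal disconnectedness to arrange that the disjoint sets $O_n$ are clopen with $(x_n^\ast\circ\mu)(O_n)>\e$, and then takes as chain members the indicator functions of the clopen sets $C_t=\overline{\bigcup_{n\in A_t}O_n}$, where $A_t=\{n\colon q_n<t\}$, $t\in\R$, is a genuine $\subsetneq$-chain built from an enumeration of the rationals. These indicators are automatically continuous, $s<t$ gives $\mathbf{1}_{C_s}\prec\mathbf{1}_{C_t}$ because the $C_t$ form a strictly increasing chain of clopen sets, and the final estimate pairs $f_t-f_s$ (which equals $1$ on $O_n$ for any $n\in A_t\setminus A_s$) against $x_n^\ast\circ\mu$ by splitting the integral over $O_n$ and over $\overline{\bigcup_{k\neq n}O_k}$. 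Your norm estimate is correct in spirit and would go through in this form, but only once the chain consists of actual continuous functions; as written, the construction of the chain --- the heart of the proof --- does not go through.
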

\begin{proof}
It suffices to consider the real space $C_\R(K)$
since, we prove (arguing by contraposition) that there is a~$\delta$-$\prec$-chain in $C_\R(K)$ which will also do the job in $C_\C(K)$.

Assume that $T\colon C_\R(K)\to X$  is a~non-weakly compact
operator, and let $\mu$ be its representing measure. Then (see the
proof of Theorem 5.5.3 in \cite{albiac_kalton}) there exist a~number
$\e>0$, a~sequence $(O_n)_{n=1}^\infty$ of pairwise disjoint open
subsets of $K$ and a~sequence $(x_n^\ast)_{n=1}^\infty$ in the unit
ball of $X^\ast$ such that $(x_n^\ast\circ\mu)(O_n)>\e$ for each
$n\in\N$ (recall that $x_n^\ast\circ\mu=T^\ast x_n^\ast$). Since
$x_n^\ast\circ\mu$ is a~regular measure and $K$ is extremally
disconnected, we may assume that each $O_n$ is clopen. By Lemma
\ref{rosenthal}, we may also assume that
$$
\abs{x_n^\ast\circ\mu}\Bigg(\oo{\bigcup_{j\neq n}O_j}\Bigg)<\frac{\e}{2}\quad\mbox{for each }n\in\N .
$$

Let $\{q_n\colon n\in\N\}$ be an enumeration of the rational numbers and define
$A_t=\{n\colon q_n<t\}$ for $t\in\R$. Then $\{A_t\colon t\in\R\}$ is an increasing chain of infinite subsets of~$\N$. For each $t$, the set $C_t=\oo{\bigcup_{n\in A_t}O_t}$ is clopen, hence its
characteristic function, call it $f_t$, is continuous. The family $\{C_t\colon t\in\R\}$ is a strictly increasing chain of clopen sets, so that $s<t$ readily implies $f_s\prec f_t$. Clearly $\n{f_t}=1$ for all~$t$.

Now, let $s<t$. Then, there is $n\in A_t\setminus A_s$ (in fact the
set is infinite). Since $$\supp(f_t-f_s)\subset\oo{\bigcup_{k\in\N}O_k}=O_n\cup\oo{\bigcup_{k\not=n}O_k}$$ and $f_t-f_s$ equals $1$ on $O_n$, we get
\begin{equation*}
\begin{split}
\n{T(f_t)-T(f_s)} &\geqslant
     x_n^\ast T(f_t-f_s)=
     \int_K(f_t-f_s)\, \mathrm{d}(x_n^\ast\circ\mu)\\
    &= \Big(\int_{O_n}+\int_{\,\oo{\bigcup_{k\neq n}O_k}}\Big)(f_t-f_s)\, \mathrm{d}(x_n^\ast\circ\mu)\\
&\geqslant(x_n^\ast\circ\mu)(O_n)-\abs{x_n^\ast\circ\mu}\Bigg(\oo{\bigcup_{k\neq n}O_k}\Bigg)\\
&>\e-\frac{\e}{2}=\frac{\e}{2}\quad (i,j\in \mathbb{N}, i\neq j)
\end{split}
\end{equation*}
which proves that $T$ does not satisfy ($\symbishop$).

\end{proof}

We now introduce a counterpart of condition ($\symbishop$) for
vector measures. Namely, for a~set algebra $\Sigma$, Banach space $X$,
 and~(finitely) additive vector measure $\mu\colon\Sigma\to X$,
 consider the following property:
\begin{itemize*}
 \item[($\symrook$)]
for each uncountable chain (with respect to inclusion)
$\{E_i\}_{i\in I}$ in $\Sigma$,
$$\inf\{\n{\mu(E_i)-\mu(E_j)}\colon i,j\in I, i\not=j\}=0.$$
\end{itemize*}

A~vector measure $\mu\colon\Sigma\to X$ is called {\it strongly
additive} provided that for every sequence
$(E_n)_{n=1}^\infty\subset\Sigma$ of pairwise disjoint sets, the
series $\sum_{n=1}^\infty\mu(E_n)$ is unconditionally convergent in
$X$.

\begin{proposition}\label{measure_1}
Let $\Sigma$ be a $\sigma$-algebra of sets and $X$ be a Banach space. A~bounded vector measure $\mu\colon\Sigma\to X$ satisfies {\rm ($\symrook$)} if and only if it is strongly additive.
\end{proposition}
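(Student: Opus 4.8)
The plan is to establish both implications by contraposition, translating in each direction between a pairwise disjoint sequence whose $\mu$-values stay bounded away from $0$ and an uncountable chain in $\Sigma$ whose $\mu$-values are pairwise separated.

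For \emph{strong additivity} $\Rightarrow$ {\rm($\symrook$)}, I would take an uncountable chain $\{E_i\}_{i\in I}$ and assume, towards a contradiction, that $\inf\{\n{\mu(E_i)-\mu(E_j)}\colon i\neq j\}=\delta>0$. An infinite linearly ordered set always contains a strictly monotone sequence of order type $\omega$ (colour each pair of indices by the direction of the inclusion and apply the infinite Ramsey theorem), so I may extract either $E_{i_1}\subsetneq E_{i_2}\subsetneq\cdots$ or the reverse. Putting $F_k=E_{i_{k+1}}\setminus E_{i_k}$ in the first case (and $E_{i_k}\setminus E_{i_{k+1}}$ in the second) gives pairwise disjoint members of $\Sigma$ with $\mu(F_k)=\pm(\mu(E_{i_{k+1}})-\mu(E_{i_k}))$ by finite additivity, so $\n{\mu(F_k)}\geqslant\delta$ for all $k$. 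But strong additivity makes $\sum_k\mu(F_k)$ unconditionally convergent, forcing $\mu(F_k)\to0$ --- a contradiction.

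For the converse I argue contrapositively, assuming $\mu$ is not strongly additive and manufacturing an uncountable chain that defeats {\rm($\symrook$)}. First I would reduce to a clean disjoint sequence: if $\sum_n\mu(E_n)$ fails to converge unconditionally for some disjoint $(E_n)$, the subseries Cauchy criterion yields $\e_0>0$ and consecutive finite index blocks $F_1,F_2,\ldots\subseteq\N$ (so their $E$-unions are disjoint) satisfying $\n{\mu(\bigcup_{n\in F_k}E_n)}\geqslant\e_0$; after replacing $E_k$ by $\bigcup_{n\in F_k}E_n$ I may assume $(E_n)$ is disjoint with $\n{\mu(E_n)}\geqslant\delta$ for every $n$. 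Next I would pick norming functionals $x_n^\ast\in X^\ast$, $\n{x_n^\ast}\leqslant1$, with $x_n^\ast\mu(E_n)=\n{\mu(E_n)}\geqslant\delta$ and pass to the scalar measures $\nu_n=x_n^\ast\circ\mu$. Since $\mu$ is bounded and a bounded finitely additive scalar measure has total variation at most a fixed multiple of its supremum norm, the $\abs{\nu_n}$ are uniformly bounded (the bound $\sup_{E}\n{\mu(E)}$ being independent of $n$, as $\n{x_n^\ast}\leqslant1$). Rosenthal's lemma \cite[Lemma I.4.1]{diestel_uhl} then lets me pass to a subsequence with $\abs{\nu_n}(\bigcup_{j\neq n}E_j)<\delta/2$ for all $n$.

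Finally I would assemble the chain exactly as in the proof of Proposition \ref{bishopwc}: enumerating the rationals as $\{q_n\colon n\in\N\}$, set $A_t=\{n\colon q_n<t\}$ and $C_t=\bigcup_{n\in A_t}E_n\in\Sigma$, giving an uncountable chain $\{C_t\colon t\in\R\}$. For $s<t$ one has $C_t\setminus C_s=\bigcup_{n\in D}E_n$ with $D=A_t\setminus A_s\neq\emptyset$; choosing any $n_0\in D$ and isolating the term $E_{n_0}$ (note $\nu_{n_0}(E_{n_0})=\n{\mu(E_{n_0})}$ is real),
$$\n{\mu(C_t)-\mu(C_s)}\geqslant\mathrm{Re}\,\nu_{n_0}\Big(\bigcup_{n\in D}E_n\Big)\geqslant\nu_{n_0}(E_{n_0})-\abs{\nu_{n_0}}\Big(\bigcup_{j\neq n_0}E_j\Big)>\delta-\tfrac{\delta}{2}=\tfrac{\delta}{2},$$
so $\{C_t\}$ is an uncountable chain with $\mu$-values pairwise $\tfrac{\delta}{2}$-separated and {\rm($\symrook$)} fails. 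I expect the last estimate to be the crux: any uncountable chain of subsets of $\N$ necessarily has infinite jumps $A_t\setminus A_s$, so $\mu(C_t\setminus C_s)$ is the value of $\mu$ on an \emph{infinite} union and cannot be handled termwise --- it is exactly Rosenthal's lemma that permits a single functional $x_{n_0}^\ast$ to register the contribution of $E_{n_0}$ while suppressing the pooled tail $\bigcup_{n\in D\setminus\{n_0\}}E_n$.
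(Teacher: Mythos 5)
Your proof is correct, and its overall architecture coincides with the paper's: the forward implication is handled by extracting a monotone $\omega$-sequence from the uncountable chain and taking successive set differences, and the reverse implication combines Rosenthal's lemma with an uncountable chain of subsets of $\N$ (the paper takes an arbitrary uncountable chain $\Ca$ of subsets of $\N$; your Dedekind-cut construction $A_t=\{n\colon q_n<t\}$ is one concrete instance, borrowed from the proof of Proposition \ref{bishopwc}). The one genuine difference lies in how you produce the pairwise disjoint sequence with $\n{\mu(E_n)}\geqslant\delta$: the paper invokes the Diestel--Faires theorem, which yields an isomorphism $T\colon\ell_\infty\to Y\subseteq X$ with $T(e_n)=\mu(A_n)$ and hence the lower bound $\n{\mu(A_n)}\geqslant\n{T^{-1}}^{-1}$, whereas you obtain the lower bound directly from the Cauchy criterion for unconditional convergence by passing to disjoint finite blocks. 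Your route is more elementary and self-contained (it uses only the definition of strong additivity together with the standard bound relating the total variation of a bounded finitely additive scalar measure to its supremum norm), while the paper's is shorter on the page and makes the $\ell_\infty$-structure explicit; both are equally valid here, since nothing downstream of this step uses the $\ell_\infty$-copy. Your side remarks --- that the $\sigma$-algebra hypothesis is exactly what places $C_t=\bigcup_{n\in A_t}E_n$ in $\Sigma$, and that Rosenthal's lemma is what controls the infinite ``jump'' $\bigcup_{n\in D\setminus\{n_0\}}E_n$ via the single variation $\abs{\nu_{n_0}}$ --- correctly identify where the hypotheses enter.
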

\begin{proof}
($\Leftarrow$) Suppose that $\mu$ does not satisfy ($\symrook$). Then there exists a~monotone (with respect to inclusion) sequence $(E_n)_{n=1}^\infty\subset\Sigma$ such that $\n{\mu(E_m)-\mu(E_n)}>\delta$ for $m,n\in\N$, $m\neq n$, and for some $\delta>0$. Put $D_n=E_{n+1}\setminus E_n$ provided $(E_n)_{n=1}^\infty$ is increasing and $D_n=E_n\setminus E_{n+1}$ otherwise. In any case $(D_n)_{n=1}^\infty$ forms a~sequence of pairwise disjoint elements from $\Sigma$ such that $\n{\mu(D_n)}>\delta$ for each $n\in\N$. Consequently, $\mu$ is not strongly additive.

\vspace*{2mm} ($\Rightarrow$) Now, suppose that $\mu$ fails to be
strongly additive. Then the Diestel--Faires theorem
 (\emph{cf}.~\cite[Theorem I.4.2]{diestel_uhl}) produces a closed subspace $Y$ of
$X$ and an isomorphism $T\colon \ell_\infty\to Y$ such that
$T(e_n)=\mu(A_n)$ for some pairwise disjoint sets
$(A_n)_{n=1}^\infty\subset\Sigma$. For any $n\in\N$ we have
$1=\n{e_n}\leqslant\n{T^{-1}}\cdot\n{\mu(A_n)}$, hence for some
$x_n^\ast$ in the unit ball of $X^\ast$ we have
$x_n^\ast\mu(A_n)\geqslant\n{T^{-1}}^{-1}$. Since
$(x_n^\ast\mu)_{n=1}^\infty$ is a~uniformly bounded sequence of
scalar measures, Rosenthal's lemma produces a~subsequence
$(x_{n_k}^\ast\mu)_{k=1}^\infty$ such that
$$\abs{x_{n_k}^\ast\mu}\Bigl(\bigcup_{j\neq
k}A_{n_j}\Bigr)<\frac{1}{2}\n{T^{-1}}^{-1}\quad\mbox{for each
}k\in\N.$$

Let $\Ca$ be an uncountable chain of subsets of $\N$ and for each $C\in\Ca$ define $E(C)=\ccup_{j\in C}A_{n_j}$. Plainly, $\{E(C)\}_{C\in\Ca}$ is an uncountable chain of members of $\Sigma$. Moreover, for any $C_1, C_2\in\Ca$ with $C_1\subsetneq C_2$, and any $k\in C_2\setminus C_1$, we have
\begin{equation}\label{RM3}
\begin{split}
\n{\mu(E(C_2))&-\mu(E(C_1))}=\n{\mu(E(C_2\setminus C_1))}=\Bigl\|\mu\Bigl(\bigcup_{j\in C_2\setminus C_1}A_{n_j}\Bigr)\Bigr\|\\
&\geqslant\Bigl|x_{n_k}^\ast\mu\Bigl(\bigcup_{j\in C_2\setminus C_1}A_{n_j}\Bigr)\Bigr|\geqslant \abs{x_{n_k}^\ast\mu(A_{n_k})}-\abs{x_{n_k}^\ast\mu}\Bigl(\bigcup_{j\neq k}A_{n_j}\Bigr)\\
&>\n{T^{-1}}^{-1}-\frac{1}{2}\n{T^{-1}}^{-1}=\frac{1}{2}\n{T^{-1}}^{-1}.
\end{split}
\end{equation}
This shows that $\mu$ fails to satisfy ($\symrook$), so the proof is complete.
\end{proof}

We are now prepared to proceed to the main result of this section.

\begin{theorem}\label{bishop_rook}
Let $K$ be a compact Hausdorff space which is extremally disconnected,
 and $X$ a~Banach space, let
 $\Sigma$ be the Borel $\sigma$-algebra of $K$ and
 let $T\colon C(K)\to X$ be an~operator with representing measure $\mu\colon\Sigma\to X^{\ast\ast}$.
 Then the following assertions are equivalent:
\begin{romanenumerate}
\item\label{weaklycompact} $T$ is weakly compact;
\item\label{repmeasure} $\mu$ is strongly additive;
\item\label{bishop} $T$ satisfies {\rm ($\symbishop$)};
\item\label{rook} $\mu$ satisfies {\rm ($\symrook$)}.
\end{romanenumerate}
\end{theorem}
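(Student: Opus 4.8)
The plan is to prove the four-fold equivalence as a small web of implications, using the three preceding propositions as the load-bearing ingredients and supplying one genuinely new bridge between the ``operator'' conditions (i), (iii) and the ``measure'' conditions (ii), (iv). Concretely, I would organise everything around the cycle (i) $\Rightarrow$ (ii) $\Rightarrow$ (iv) $\Rightarrow$ (i), supplemented by (i) $\Leftrightarrow$ (iii).

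The two equivalences internal to each ``side'' are essentially free. For (i) $\Rightarrow$ (iii) I quote Proposition \ref{wcompactisbishop} directly (this needs no disconnectedness), and for (iii) $\Rightarrow$ (i) I quote Proposition \ref{bishopwc}, which is where extremal disconnectedness enters through Lemma \ref{rosenthal}. For (ii) $\Leftrightarrow$ (iv) I invoke Proposition \ref{measure_1}; the only point to check is its standing hypothesis that $\mu$ be a bounded vector measure, and this is immediate from the representation, since $\n{\mu(A)}\leqslant\n{\mu}(A)\leqslant\n{\mu}(K)=\n{T}<\infty$ for every $A\in\Sigma$, with $\Sigma$ a $\sigma$-algebra and $X^{\ast\ast}$ a Banach space.

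It remains to weld the two sides together. Here I would first observe that the construction in the proof of Proposition \ref{bishopwc} already yields (iv) $\Rightarrow$ (i) for free: the uncountable chain it produces to refute ($\symbishop$) consists of indicator functions $f_t=\ind_{C_t}$ of \emph{clopen} sets $C_t\in\Sigma$, for which $Tf_t=T^{\ast\ast}\p_{C_t}=\mu(C_t)$, so the very same estimate $\n{Tf_t-Tf_s}=\n{\mu(C_t)-\mu(C_s)}>\e/2$ shows that the inclusion-chain $\{C_t\}_{t\in\R}$ witnesses the failure of ($\symrook$); contrapositively, a non-weakly-compact $T$ has a representing measure failing ($\symrook$), which is (iv) $\Rightarrow$ (i). For the reverse passage (i) $\Rightarrow$ (ii) I would argue that weak compactness of $T$ forces $\mu$ to be strongly additive: $T^{\ast\ast}$ is then $X$-valued and weak$^\ast$-to-weak continuous, so for any disjoint sequence $(E_n)\subseteq\Sigma$ and any $M\subseteq\N$ the partial unions give $\sum_{n\in M,\,n\leqslant N}\mu(E_n)=\mu\big(\bigcup_{n\in M,\,n\leqslant N}E_n\big)\to\mu\big(\bigcup_{n\in M}E_n\big)$ weakly in $X$ (the $\p$'s converge weak$^\ast$ by countable additivity of the members of $M(K)$); thus $\sum_n\mu(E_n)$ is weakly subseries convergent, and the Orlicz--Pettis theorem upgrades this to unconditional norm convergence, i.e.\ strong additivity. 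Chaining (i) $\Rightarrow$ (ii) $\Rightarrow$ (iv) $\Rightarrow$ (i) with (i) $\Leftrightarrow$ (iii) closes the loop.

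The main obstacle is this last bridge (i) $\Rightarrow$ (ii). Unlike the other steps, it cannot be read off the chain constructions of the previous propositions and genuinely requires the representation-theoretic characterisation of weak compactness (the weak$^\ast$-to-weak continuity of $T^{\ast\ast}$) together with Orlicz--Pettis. An alternative packaging of the same fact is to identify $\{x^\ast\circ\mu\colon x^\ast\in B_{X^\ast}\}$ with $T^\ast(B_{X^\ast})$ and to combine Gantmacher's theorem with the Bartle--Dunford--Schwartz characterisation of relative weak compactness in $M(K)$ by uniform countable additivity, which is precisely the uniform form of strong additivity of $\mu$. Either way, everything else is bookkeeping around Propositions \ref{wcompactisbishop}, \ref{bishopwc} and \ref{measure_1}.
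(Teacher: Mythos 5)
Your proof is correct, and it agrees with the paper on the two links that carry the real content: (i) $\Leftrightarrow$ (iii) via Propositions \ref{wcompactisbishop} and \ref{bishopwc}, and (ii) $\Leftrightarrow$ (iv) via Proposition \ref{measure_1} (your verification that $\mu$ is bounded, from $\n{\mu(A)}\leqslant\n{\mu}(K)=\n{T}$, is exactly the hypothesis one must check there). Where you diverge is in how the operator side is joined to the measure side. The paper simply quotes the Bartle--Dunford--Schwartz theorem for (i) $\Leftrightarrow$ (ii), notes that this holds for arbitrary compact Hausdorff $K$, and is finished in three lines. You instead close the cycle (i) $\Rightarrow$ (ii) $\Rightarrow$ (iv) $\Rightarrow$ (i): for (i) $\Rightarrow$ (ii) you reprove the easy half of Bartle--Dunford--Schwartz (Gantmacher, weak$^\ast$-to-weak continuity of $T^{\ast\ast}$ on partial unions, then Orlicz--Pettis to upgrade weak subseries convergence to unconditional norm convergence), and for (iv) $\Rightarrow$ (i) you observe that the chain built in the proof of Proposition \ref{bishopwc} consists of indicators of clopen sets $C_t$ with $Tf_t=T^{\ast\ast}\p_{C_t}=\mu(C_t)$, so the very same estimate $\n{\mu(C_t)-\mu(C_s)}>\e/2$ already refutes ($\symrook$). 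Both observations are sound. Your route buys a more self-contained argument --- only the elementary direction of Bartle--Dunford--Schwartz is used, and the harder implication (ii) $\Rightarrow$ (i) is bypassed entirely --- at the price of leaning on the internals of Proposition \ref{bishopwc} rather than on its statement; the paper's version is shorter and makes transparent that the equivalence (i) $\Leftrightarrow$ (ii) requires no extremal disconnectedness, that hypothesis being consumed only by Proposition \ref{bishopwc}.
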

\begin{proof}The equivalence of clauses (\ref{weaklycompact}) and (\ref{repmeasure}) is the Bartle--Dunford--Schwartz theorem
 (\emph{cf}.~\cite[Theorem VI.2.5]{diestel_uhl}) and is valid for any compact Hausdorff space $K$. The equivalence of clauses (\ref{weaklycompact}) and (\ref{bishop}) follows from Propositions \ref{wcompactisbishop} and \ref{bishopwc}. The equivalence of (\ref{repmeasure}) and (\ref{rook}) is immediate form Proposition \ref{measure_1}.
\end{proof}

It is easily seen that the implication ($\Leftarrow$) of Proposition \ref{measure_1} holds true for any set algebra $\Sigma$, not necessarily a~$\sigma$-algebra. However, the situation is not so clear for the implication ($\Rightarrow$), so the natural question arises: how can one characterise the class of set algebras $\Sigma$ for which every vector measure $\mu\colon\Sigma\to X$ satisfying ($\symrook$) is strongly additive? An~inspection of the proof of Proposition \ref{measure_1} suggests that some kind of interpolation property of $\Sigma$ could do the job, so we may make our problem more precise. A~set algebra $\Sigma$ has the {\it subsequential completeness property} (SCP) whenever for every sequence $(E_n)_{n=1}^\infty$ of pairwise disjoint sets from $\Sigma$ there is a~subsequence $(E_{n_k})_{k=1}^\infty$ with $\ccup_kE_{n_k}\in\Sigma$. (Haydon constructed a~set algebra with (SCP) which is not a~$\sigma$-algebra; see \cite[Proposition 1E]{haydon}.)
\begin{question}\label{SCP}
Suppose that a set algebra $\Sigma$ has (SCP). Is it true that, for any Banach space $X$, every vector measure
$\mu\colon\Sigma\to X$ satisfying ($\symrook$) is necessarily
strongly additive?
\end{question}

\begin{remark}
The second and third-named authors (\cite{kaniakochanek}) have
recently studied the operator ideal $\mathscr{W\!C\!G}$ of
\emph{weakly compactly generated} operators, that is, operators
whose range is contained in a~weakly compactly generated subspace of
their codomain. (A Banach space is \emph{weakly compactly generated}
whenever it contains a linearly dense weakly compact subset.) The
class $\mathscr{W\!C\!G}$ contains all weakly compact operators and
all operators having separable range, but in contrast, a weakly
compactly generated operator defined on a~$C(K)$-space need not
satisfy ${\rm (\symbishop)}$ (\emph{cf}.~Proposition
\ref{wcompactisbishop}).

To see this, consider the ordinal interval $K=[0,\omega_1]$ equipped with the order topology which makes it a~compact Hausdorff space. Let $D = \{0\}\cup\{\alpha+1\colon\alpha<\omega_1\}$. Define a~mapping $\varphi\colon [0,\omega_1]\to [0,\omega_1]$ by $$\varphi(\alpha)=\left\{\begin{array}{ll}
\alpha+1, & \mbox{if }\alpha\in D,\\
\alpha, & \mbox{if }\alpha\in [0,\omega_1]\setminus D.
\end{array}\right.$$
Plainly, $\varphi$ is continuous, hence the operator $C_\p\colon C[0,\omega_1]\to C[0,\omega_1]$ defined by $C_\p f=f\circ\p$ is bounded.

We note that $T=I_{C[0,\om_1]}-C_\p$ maps the Schauder basis $\{\ind_{[0,\alpha]}\}_{0\leqslant\alpha\leqslant\om_1}$ of $C[0,\om_1]$ onto the set $\{\mathbf{1}_{\{\alpha\}}\}_{\alpha\in D}\cup\{0\}$. Consequently, the range of $T$ is isomorphic to $c_0(\omega_1)$, which is a weakly compactly generated Banach space, so that $T\in \mathscr{W\!C\!G}(C[0,\omega_1])$. On the other hand, $T$ maps the $1$-$\prec$-chain $\{\ind_{[0,\alpha]}\}_{\alpha\in D}$ onto $\{\ind_{\{\alpha\}}\}_{\alpha\in D}$, so $T$ fails ${\rm (\symbishop)}$.\end{remark}

It is natural to ask whether the condition  ${\rm (\symbishop)}$ is the same as a seemingly weaker similar statement {\rm($\symbishop_{\rm{wo}}$)} in which the $\delta$-$\prec$-chains are assumed to be well-ordered. This is, however, not the case, as we shall see using the following observation.
\begin{proposition}\label{wobishop}Let $K$ be a compact Hausdorff space satisfying c.c.c.~Then the identity map $I_{C(K)}$ satisfies {\rm($\symbishop_{\rm{wo}}$)}.
\end{proposition}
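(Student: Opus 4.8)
The plan is to argue by contraposition and manufacture an uncountable family of pairwise disjoint nonempty open subsets of $K$, which is impossible under c.c.c. So fix $\delta>0$ and suppose, towards a contradiction, that $\{f_\alpha\}_{\alpha<\omega_1}$ is an uncountable well-ordered $\delta$-$\prec$-chain; after passing to an initial segment of the well-ordering we may assume it is indexed by $\omega_1$ so that $\alpha<\beta$ implies $f_\alpha\prec f_\beta$.

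The first step is to observe that supports increase along the chain: $\alpha<\beta$ implies $\supp(f_\alpha)\subseteq\supp(f_\beta)$. This is immediate from the definition of $\prec$, since on $\supp(f_\alpha)$ one has $f_\beta=f_\alpha$, so the set $\{x\colon f_\alpha(x)\neq 0\}$ is contained in $\{x\colon f_\beta(x)\neq 0\}$, and taking closures yields the inclusion.

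The decisive step uses the well-ordering to pass to consecutive terms. For each $\alpha<\omega_1$ I would set $O_\alpha=\{x\in K\colon \abs{f_{\alpha+1}(x)-f_\alpha(x)}>\delta/2\}$. This set is open, and it is nonempty because $\n{f_{\alpha+1}-f_\alpha}\geqslant\delta$ (here $K$ compact guarantees the supremum is attained, or at any rate approached). Since $f_{\alpha+1}=f_\alpha$ on $\supp(f_\alpha)$, the difference vanishes there, so $O_\alpha$ is disjoint from $\supp(f_\alpha)$; consequently $f_\alpha\equiv 0$ on $O_\alpha$, hence $\abs{f_{\alpha+1}}=\abs{f_{\alpha+1}-f_\alpha}>\delta/2$ throughout $O_\alpha$, which gives $O_\alpha\subseteq\supp(f_{\alpha+1})$. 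Pairwise disjointness now follows: for $\alpha<\beta$ we have $\alpha+1\leqslant\beta$, so support monotonicity yields $O_\alpha\subseteq\supp(f_{\alpha+1})\subseteq\supp(f_\beta)$, whereas $O_\beta\cap\supp(f_\beta)=\emptyset$, whence $O_\alpha\cap O_\beta=\emptyset$. Thus $\{O_\alpha\}_{\alpha<\omega_1}$ is an uncountable family of pairwise disjoint nonempty open sets, contradicting c.c.c.

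The only genuine subtlety — and the precise point where the well-ordering hypothesis is used — is the passage to the consecutive pairs $(f_\alpha,f_{\alpha+1})$: a chain that is merely totally ordered by $\prec$ need not have a ``next'' element, so this manufacture of disjoint witnesses is unavailable in that generality. This is exactly why the present argument establishes only {\rm($\symbishop_{\rm{wo}}$)} and why one should not expect it, on its own, to deliver the full condition {\rm($\symbishop$)}.
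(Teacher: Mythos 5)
Your argument is correct and is essentially the paper's proof: both pass to consecutive terms $f_\alpha, f_{\alpha+1}$ of the well-ordered chain and take the open sets where $\abs{f_{\alpha+1}-f_\alpha}$ exceeds a fixed positive threshold, contradicting c.c.c. You spell out the disjointness (via monotonicity of supports) that the paper leaves implicit, and your use of the threshold $\delta/2$ rather than $\delta$ neatly sidesteps the edge case where $\n{f_{\alpha+1}-f_\alpha}$ equals $\delta$ exactly.
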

\begin{proof} Let $K$ be a compact Hausdorff space which does not satisfy the condition {\rm($\symbishop_{\rm{wo}}$)}. With no loss of generality, we may assume that, for some $\delta>0$, there exists an uncountable, well-ordered $\delta$-$\prec$-chain $\{f_\alpha\}_{\alpha<\omega_1}\subset C(K)$. For each ordinal $\alpha<\omega_1$ set $g_\alpha = |f_{\alpha+1}-f_\alpha|$. The family $\{g_\alpha^{-1}\big((\delta, \infty)\big)\colon \alpha<\omega_1\}$ consists of uncountably many pairwise disjoint open sets, so $K$ fails the c.c.c.~condition. \end{proof}

\begin{corollary}The condition {\rm($\symbishop_{\rm{wo}}$)} is strictly weaker than {\rm($\symbishop$)}.\end{corollary}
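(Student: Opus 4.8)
The plan is to prove the two halves of ``strictly weaker'' in turn. The implication that {\rm($\symbishop$)} implies {\rm($\symbishop_{\rm{wo}}$)} is immediate: a well-ordered $\delta$-$\prec$-chain is in particular a $\delta$-$\prec$-chain, so any space whose identity satisfies {\rm($\symbishop$)} automatically satisfies {\rm($\symbishop_{\rm{wo}}$)}. All the content therefore lies in exhibiting a single compact Hausdorff space $K$ that satisfies {\rm($\symbishop_{\rm{wo}}$)} but fails {\rm($\symbishop$)}. In view of Proposition \ref{wobishop}, it is enough to find a space that is c.c.c.\ yet carries an uncountable $\delta$-$\prec$-chain: the c.c.c.\ hypothesis then yields {\rm($\symbishop_{\rm{wo}}$)} for free, while the chain witnesses the failure of {\rm($\symbishop$)}.

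For the example I would take $K$ to be the split interval (double arrow space), that is, $[0,1]\times\{0,1\}$ with the order topology induced by the lexicographic order. This space is compact, Hausdorff, zero-dimensional and separable, hence c.c.c. The reason for the choice is that it is totally disconnected and densely ordered, so it admits an uncountable family of clopen sets indexed by a densely ordered set. Concretely, for $t\in(0,1)$ put $C_t=\{z\in K\colon z\leqslant(t,0)\}$. Since $(t,1)$ is the immediate successor of $(t,0)$, one has $C_t=\{z<(t,1)\}$ (open) with complement $\{z>(t,0)\}=\{z\geqslant(t,1)\}$ (also open), so each $C_t$ is clopen; and for $s<t$ the point $(s,1)$ lies in $C_t\setminus C_s$, so $C_s\subsetneq C_t$. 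Thus $\{C_t\}_{t\in(0,1)}$ is an uncountable, strictly increasing chain of clopen sets, order-isomorphic to $(0,1)$ and in particular not well-ordered.

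Setting $f_t=\mathbf{1}_{C_t}$, each $f_t$ is continuous with $\supp(f_t)=C_t$ and $\n{f_t}=1$; for $s<t$ the functions $f_s$ and $f_t$ both equal $1$ on $\supp(f_s)=C_s\subseteq C_t$, so $f_s\prec f_t$, while $\n{f_s-f_t}=1$. Hence $\{f_t\}_{t\in(0,1)}$ is an uncountable $1$-$\prec$-chain, so $K$ fails {\rm($\symbishop$)}; and since $K$ is c.c.c., Proposition \ref{wobishop} gives that $K$ satisfies {\rm($\symbishop_{\rm{wo}}$)}, completing the separation and hence the corollary.

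It is worth flagging where the real content sits, which is also the main point to get right. The proof of Proposition \ref{wobishop} extracts the pairwise disjoint open sets $g_\alpha^{-1}((\delta,\infty))$ from consecutive differences $g_\alpha=|f_{\alpha+1}-f_\alpha|$, and so depends essentially on the existence of immediate successors along the chain; a densely ordered chain such as $\{f_t\}_{t\in(0,1)}$ has no successors, and this is exactly the loophole the example exploits. The genuine obstacle is therefore not the (routine) verifications above but recognising which spaces are available: total disconnectedness is forced, so connected spaces such as $[0,1]$ are ruled out, and the split interval is the natural candidate reconciling the c.c.c.\ requirement with an abundance of clopen sets.
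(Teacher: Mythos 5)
Your proof is correct and follows essentially the same route as the paper: it invokes Proposition \ref{wobishop} and then exhibits the double arrow space together with the uncountable $1$-$\prec$-chain of indicator functions of clopen initial order-intervals. The only (immaterial) difference is that you take the full square $[0,1]\times\{0,1\}$ rather than the paper's version with the two extreme isolated points removed.
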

\begin{proof}In the light of Proposition \ref{wobishop}, it is sufficient to exhibit an example of a compact Hausdorff space which is c.c.c., yet fails {\rm($\symbishop$)}.

Let $K$ be the \emph{double arrow space}, that is, $K$ is the set $((0,1]\times \{0\})\cup ([0,1)\times \{1\})$ endowed with the order topology arising from the lexicographic order. It is well known that $K$ is a first-countable, non-metrisable, compact Hausdorff space which is separable (hence also c.c.c.), see \cite[3.10.C]{Eng}. We note that for each $0<b<1$ the order-interval $I_b:=\big[(0,1), (b,0)\big]$ is clopen in $K$, which means that its characteristic function $f_b = \mathbf{1}_{I_b}$ is continuous. Consequently, $\{f_b\}_{0<b<1}$ is an uncountable $1$-$\prec$-chain, so $K$ fails $(\symbishop)$, as desired.\end{proof}

\section{Compact Hausdorff spaces enjoying $(\symbishop)$}
In this section we analyse certain compact spaces to show that the identity operator on a $C(K)$-space can satisfy ($\symbishop$). We note that there is no loss of generality in considering only
$\prec$-chains consisting of non-negative functions. Indeed, if
$f\prec g$ then $|f|\prec |g|$. In the remainder of this section, we
shall therefore assume that every $\prec$-chain has this property.

For a $\prec$-chain $F$, we denote by $A(F,f)$ the set of
predecessors of $f$, that is, $A(F,f)=\{g\in F\colon g\prec f\}$;
furthermore we set
 $$Z(F,f)=\{x\in K\colon
f(x)\neq0\}\setminus\bigcup\{\supp(g)\colon g\in A(F,f)\}.$$

\begin{lemma}\label{Zi}
Let $K$ be a compact Hausdorff space, let $\delta>0$ and let
$F\subset C(K)$ be a $\delta$-$\prec$-chain. Then, for each $f\in
F$, $Z(F,f)$ is nonempty; in particular there is $x\in Z(F,f)$
such that $f(x)\geqslant\delta$.
\end{lemma}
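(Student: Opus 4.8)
The plan is to argue directly, producing a single point $x_*$ with $f(x_*)\geqslant\delta$ that escapes every predecessor support. First I would record the order structure of the supports: if $g\prec h$ then $g$ and $h$ agree on $\supp(g)$, so $\{x:g(x)\neq0\}\subseteq\{x:h(x)\neq0\}$ and hence $\supp(g)\subseteq\supp(h)$. Thus $\{\supp(g):g\in A(F,f)\}$ is a chain of closed sets ordered by inclusion, and the same holds for their interiors. Next, for a fixed $g\in A(F,f)$ I would analyse $f-g$: since $f,g\geqslant0$ (by the standing assumption of this section) and $g\prec f$, we have $f-g=0$ on $\supp(g)$ and $f-g=f\geqslant0$ off $\supp(g)$; in particular $f-g\geqslant0$ throughout $K$. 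As $F$ is a $\delta$-$\prec$-chain, $\n{f-g}\geqslant\delta$, and by compactness this norm is attained at some $x_g\in K$ with $(f-g)(x_g)=\n{f-g}\geqslant\delta>0$. Because $f-g$ vanishes on $\supp(g)$, necessarily $x_g\notin\supp(g)$, while $f(x_g)=(f-g)(x_g)\geqslant\delta$.

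The key observation, which circumvents the obstruction caused by boundaries of the supports, is the following continuity fact: if $x\in\supp(g)$ and $f(x)\geqslant\delta$, then in fact $x\in\mathrm{int}\,\supp(g)$. Indeed, $g\prec f$ forces $g(x)=f(x)\geqslant\delta>0$, so the open set $\{y\in K:g(y)>\delta/2\}$ contains $x$ and is contained in $\{g\neq0\}\subseteq\supp(g)$; hence $x$ is interior to $\supp(g)$. Equivalently, the compact set $C=\{x\in K:f(x)\geqslant\delta\}$ meets no topological boundary of a predecessor support, so $C\cap\supp(g)=C\cap\mathrm{int}\,\supp(g)$ for every $g\in A(F,f)$.

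With these in hand I would finish by a compactness (finite-intersection) argument. For each $g\in A(F,f)$ put $\Phi_g=C\setminus\mathrm{int}\,\supp(g)$, a closed, hence compact, subset of $K$. Each $\Phi_g$ is nonempty, as $x_g\in C$ and $x_g\notin\supp(g)\supseteq\mathrm{int}\,\supp(g)$; moreover, since the interiors of the supports form an inclusion chain, the family $\{\Phi_g\}_{g\in A(F,f)}$ is totally ordered by reverse inclusion, so every finite subfamily has nonempty intersection. By compactness of $K$ there is a point $x_*\in\bigcap_{g\in A(F,f)}\Phi_g$. Then $f(x_*)\geqslant\delta$, and $x_*\notin\mathrm{int}\,\supp(g)$ for all $g$; were $x_*\in\supp(g_0)$ for some $g_0$, the continuity fact above would give $x_*\in\mathrm{int}\,\supp(g_0)$, a contradiction. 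Hence $x_*\notin\bigcup\{\supp(g):g\in A(F,f)\}$ and $f(x_*)\geqslant\delta\neq0$, so $x_*\in Z(F,f)$, as required. The one delicate point is the boundary phenomenon handled in the second paragraph: without it, the maximisers $x_g$ of $f-g$ sit outside $\supp(g)$ but could in principle accumulate on $\bigcup_g\partial\,\supp(g)$, and the continuity fact is exactly what rules this out.
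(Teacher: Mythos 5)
Your proof is correct, and it takes a genuinely different route from the paper's. The paper splits into two cases according to whether $f$ has a direct $\prec$-predecessor; in the harder case it chooses a transfinite increasing cofinal sequence $(g_\alpha)_{\alpha<\theta}$ in $A(F,f)$, selects witnesses $x_\alpha\in\supp(g_{\alpha+1})\setminus\supp(g_\alpha)$ with $g_{\alpha+1}(x_\alpha)\geqslant\delta$, and extracts the desired point from the cluster set $L=\bigcap_{\alpha<\theta}\overline{\{x_\beta\colon\beta\geqslant\alpha\}}$. You avoid both the case distinction and the choice of a cofinal sequence by running a finite-intersection-property argument over the entire predecessor family at once: the sets $\Phi_g=\{f\geqslant\delta\}\setminus\mathrm{int}\,\supp(g)$ are nonempty compact sets forming a chain under reverse inclusion, hence have a common point. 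Your ``continuity fact'' --- that $\{f\geqslant\delta\}$ meets $\supp(g)$ only in its interior, because $f=g$ on $\supp(g)$ --- is exactly what upgrades ``outside every $\mathrm{int}\,\supp(g)$'' to ``outside every $\supp(g)$''; the paper needs the same observation (in its second case, $L\cap\supp(g)=\varnothing$ follows from $g\equiv 0$ on $L$ only because $f\geqslant\delta$ on $L$ and $f=g$ on $\supp(g)$) but leaves it implicit, so your explicit treatment of the boundary is a genuine point in your favour. What your argument buys is uniformity and the avoidance of transfinite recursion; what the paper's buys is a more concrete picture of where the point of $Z(F,f)$ comes from. One caveat, shared equally by both proofs: if $A(F,f)=\varnothing$ (i.e.\ $f$ is the $\prec$-least element of $F$) and $\n{f}<\delta$ --- for instance $f=0$, which is a legitimate member of a $\prec$-chain --- then $Z(F,f)$ can be empty and the statement itself fails; both arguments tacitly assume $f$ has at least one predecessor, which is harmless in all applications of the lemma.
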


\begin{proof}
This is clear if $f$ has a direct predecessor $g$ in $F$; in that case
$$Z(F,f)=\{x\in K\colon x\notin\supp(g)\mbox{ and }f(x)>0\}.$$
As $\|f-g\|\geqslant\delta$ there must be $x\in K\setminus\supp(g)$ with $f(x)\geqslant\delta$;
hence $x$ belongs to~$Z(F,f)$.

In case where $f$ does not have a direct predecessor, we take an increasing and
cofinal (possibly transfinite) sequence $\{g_\alpha\colon \alpha<\theta\}$
in $A(F,f)$.
For each $\alpha$ we pick
$x_\alpha\in\supp(g_{\alpha+1})\setminus\supp(g_\alpha)$
such that $g_{\alpha+1}(x_\alpha)\geqslant\delta$.
Note that this implies that $f(x_\alpha)\geqslant\delta$ for all~$\alpha$;
therefore $f(x)\geqslant\delta$ for all $x\in L$, where
$$L=\bigcap_{\alpha<\theta}\overline{\{x_\beta\colon \beta\geqslant\alpha\}}.$$
On the other hand, if $g\prec f$, then $g\prec g_\alpha$ for some~$\alpha$
and then $g(x_\beta)=0$ for $\beta\geqslant\alpha$ and hence $g(x)=0$
for $x\in L$.
This yields that the intersection $L\cap\supp(g)$ is empty for all $g\in A(F,f)$ and hence that $L\subseteq Z(F,f)$.
\end{proof}

\begin{corollary}
Every compact Hausdorff space which is countable satisfies {\rm $(\symbishop)$}.
\end{corollary}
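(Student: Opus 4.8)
The plan is to argue by contraposition, showing that if a compact Hausdorff space $K$ fails $(\symbishop)$, then $K$ must be uncountable. So suppose $K$ does not satisfy $(\symbishop)$; then there are some $\delta>0$ and an uncountable $\delta$-$\prec$-chain $F\subseteq C(K)$, which (as noted at the start of the section) we may take to consist of non-negative functions. The core idea is to extract from $F$ an injection into $K$; since $F$ is uncountable, this forces $|K|$ to be uncountable, contradicting countability.

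To build the injection I would use Lemma~\ref{Zi}: for each $f\in F$ the set $Z(F,f)$ is nonempty, so I may select a point $x_f\in Z(F,f)$ (Lemma~\ref{Zi} even gives $f(x_f)\geqslant\delta$, though only non-emptiness will matter). Recall that $Z(F,f)=\{x\in K\colon f(x)\neq0\}\setminus\bigcup\{\supp(g)\colon g\in A(F,f)\}$. The key step is then to check that $f\mapsto x_f$ is injective, and this is built directly into the definition of $Z$. Indeed, let $f,f'\in F$ be distinct; since $F$ is a $\prec$-chain we may assume $f\prec f'$, that is, $f\in A(F,f')$. Because $f(x_f)\neq0$ we have $x_f\in\supp(f)$, whereas $x_{f'}\in Z(F,f')$ together with $f\in A(F,f')$ forces $x_{f'}\notin\supp(f)$. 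Hence $x_f\neq x_{f'}$.

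With injectivity in hand the proof closes immediately: $|K|\geqslant|F|$, so $K$ is uncountable, completing the contrapositive. I do not expect a genuine obstacle here, since Lemma~\ref{Zi} has already carried out the substantive work of producing the witnessing points; the only point requiring care is recognising that the set-theoretic shape of $Z(F,f)$ — removing the supports of all $\prec$-predecessors — is exactly what guarantees that the assignment $f\mapsto x_f$ separates distinct members of the chain.
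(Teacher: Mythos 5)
Your argument is correct and is essentially the proof the paper intends: the corollary is stated immediately after Lemma~\ref{Zi} precisely because the sets $Z(F,f)$ are nonempty and pairwise disjoint (a fact the authors invoke explicitly in the proof of Theorem~\ref{cellular}), so an uncountable $\delta$-$\prec$-chain yields uncountably many distinct points of $K$. Your injectivity check for $f\mapsto x_f$ is just the pointwise form of that disjointness, so the two arguments coincide.
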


For an order type $\alpha$ we denote by $\alpha^\ast$ the
reverse of $\alpha$.

\begin{proposition}\label{thm:omegaD}
Let $K$ be the one-point compactification of a discrete
space~$\Gamma$, let $\delta>0$, and let a
$\delta$-$\prec$-chain $F\subset C(K)$ be given. Then there are
$\alpha,\beta\leqslant\omega$ such that the order type of~$F$ is
$\alpha+\beta^\ast$
\end{proposition}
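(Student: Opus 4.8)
The plan is to classify the functions of $F$ by their value at the point at infinity and to show that this splits $F$ into a well-ordered initial segment and a reverse-well-ordered final segment, each of order type at most $\omega$. Write $K=\Gamma\cup\{\infty\}$ and, as permitted, assume every $f\in F$ is non-negative. I would first record that if $f\prec g$ then $\{x:f(x)>0\}\subseteq\{x:g(x)>0\}$, so $\supp(f)\subseteq\supp(g)$, and since $f,g$ agree on $\supp(f)$ and vanish off their respective supports, the inclusion is proper; thus supports strictly increase along $\prec$. I would also note that $f(\infty)>0$ forces $f(\gamma)\to f(\infty)>0$, so $\supp(f)$ is then cofinite. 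Now set $F^0=\{f\in F:f(\infty)=0\}$ and $F^+=\{f\in F:f(\infty)>0\}$. If $f\in F^0$, $g\in F^+$ and $g\prec f$, then $\infty\in\supp(g)$ forces $f(\infty)=g(\infty)>0$, a contradiction; hence $f\prec g$ whenever $f\in F^0,\,g\in F^+$, so $F^0$ is an initial and $F^+$ a final segment. The same agreement argument shows that all elements of $F^+$ share a single value $c:=f(\infty)>0$ at infinity.

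For the initial segment I would show that each $f\in F^0$ has only finitely many $\prec$-predecessors. Since $f(\infty)=0$, continuity at $\infty$ makes $\{x:f(x)\geq\delta\}$ finite. By Lemma \ref{Zi}, for every $g\in F$ pick $x_g\in Z(F,g)$ with $g(x_g)\geq\delta$. If $g\prec f$ then $x_g\in\supp(g)\subseteq\supp(f)$ and $f(x_g)=g(x_g)\geq\delta$, so $x_g\in\{x:f(x)\geq\delta\}$; moreover $g_1\prec g_2\prec f$ gives $x_{g_2}\notin\supp(g_1)$ while $x_{g_1}\in\supp(g_1)$, so $g\mapsto x_g$ is injective on $A(F,f)$. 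Thus $A(F,f)$ injects into a finite set and is finite. A chain in which every element has finitely many predecessors is well-ordered of order type at most $\omega$, so $F^0$ has order type $\alpha\leq\omega$.

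For the final segment I would use that every $f\in F^+$ has cofinite support, so $\phi(f):=K\setminus\supp(f)$ is a finite subset of $\Gamma$. Since supports strictly increase, $f\prec g$ gives $\phi(f)\supsetneq\phi(g)$, so $\phi$ is a strictly order-reversing injection of $F^+$ into the finite subsets of $\Gamma$. An infinite strictly $\prec$-increasing sequence in $F^+$ would yield an infinite strictly decreasing sequence of finite sets, which is impossible; hence $F^+$ is reverse-well-ordered. Furthermore the successors of a fixed $f\in F^+$ inject via $\phi$ into the subsets of the finite set $\phi(f)$, so each element of $F^+$ has finitely many successors, whence $F^+$ has order type $\beta^*$ with $\beta\leq\omega$. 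Concatenating the two segments gives that $F$ has order type $\alpha+\beta^*$ with $\alpha,\beta\leq\omega$.

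I expect the main obstacle to be correctly isolating the reverse-well-ordered tail: one must recognise that \emph{positivity at infinity} is the right dividing line, since it is exactly what makes supports cofinite and hence their complements finite, which is what forces the $\beta^*$ behaviour; and one must verify that the two regimes sit as a clean initial and final segment rather than interleaving. Once this dichotomy is in place, the remaining points (finiteness of $\{x:f(x)\geq\delta\}$ for $f\in F^0$, and injectivity of the two witness maps) are routine.
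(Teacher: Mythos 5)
Your proposal is correct and follows essentially the same route as the paper: split $F$ by the value at the point at infinity, observe that the zero-at-infinity part forms an initial segment whose elements have finite witness sets $\{x\colon f(x)\geqslant\delta\}$ (hence order type at most $\omega$), and that the positive-at-infinity part has finite complements of supports, forcing reverse well-ordering of type $\beta^\ast$. The only cosmetic differences are your use of Lemma~\ref{Zi} to inject predecessors into the finite witness set (the paper instead uses the strict inclusions $\Upsilon_f\subsetneq\Upsilon_g$ directly) and your use of $K\setminus\supp(f)$ where the paper uses the zero set.
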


\begin{proof}
Let $F$ be a $\delta$-$\prec$-chain. Split $F$ into $F_1=\{f\colon f(\infty)=0\}$ and $F_2=\{f\colon f(\infty)>0\}$. (Recall that we are assuming that $F$ consists of non-negative functions only.)
Note that $f\prec g$ whenever $f\in F_1$ and $g\in F_2$.

For $f\in F_1$ set $\Upsilon_f=\{x\colon f(x)\geqslant \delta\}$; this set is finite.
If $f\prec g$ in $F_1$ then $\Upsilon_f$ is a proper subset of~$\Upsilon_g$.
This implies that the order type of $F_1$ is at most $\omega$.

Likewise, for $f\in F_2$ set $\Upsilon_f=\{x\colon f(x)=0\}$, which is again a finite set.
In this case $\Upsilon_f$ contains properly $\Upsilon_g$ whenever $f\prec g$ in $F_2$.
It follows that $F_2$ is order-isomorphic to a subset of $\{-n\colon n\in\omega\}\subseteq \mathbb{Z}$.
\end{proof}

We have now arrived at one of the main results of this section. We
shall prove that local connectedness is a sufficient condition for
absence of uncountable $\delta$-$\prec$-chains of functions on
spaces satisfying c.c.c. A topological space is \emph{locally
connected} if each point has a neighbourhood basis consisting
of connected sets. The disjoint union of finitely many copies of the
unit interval is an easy example of a (linearly ordered) compact
space which is locally connected, but not connected.
\begin{theorem}\label{cellular}
Let $K$ be a compact Hausdorff space which is locally connected and
 let $\delta>0$. The cardinality of any~$\delta$-$\prec$-chain in $K$ does not exceed the cellularity of $K$.\end{theorem}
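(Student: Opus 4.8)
The plan is to attach to every $f$ in the chain a nonempty open set $C_f$ and to prove that the family $\{C_f\}_{f\in F}$ is pairwise disjoint; since such a family has cardinality $|F|$, this forces $|F|\leqslant c(K)$ directly from the definition of cellularity. Recall that we are assuming every member of $F$ is non-negative. For each $f\in F$, Lemma~\ref{Zi} furnishes a point $x_f\in Z(F,f)$ with $f(x_f)\geqslant\delta$. I would let $C_f$ be the connected component of the open set $\{x\in K\colon f(x)>\delta/2\}$ that contains $x_f$. Because $K$ is locally connected, the connected components of any open set are themselves open, so each $C_f$ is a nonempty open subset of $K$.

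It remains to verify disjointness, and since $F$ is a $\prec$-chain any two distinct elements are comparable, so I may take $f,g\in F$ with $f\prec g$ and show $C_f\cap C_g=\varnothing$. The three observations driving the argument are: first, $x_f\in\mathrm{int}\,\supp(f)$, since $f(x_f)\geqslant\delta>0$ and continuity makes $f$ positive on a neighbourhood of $x_f$; second, $x_g\notin\supp(f)$, because $x_g\in Z(F,g)$ while $f$ is a predecessor of $g$; and third, $f=g$ on $\supp(f)$ by the definition of $\prec$, so that $\{x\colon f(x)>\delta/2\}\subseteq\{x\colon g(x)>\delta/2\}$. Suppose, for contradiction, that $C_f\cap C_g\neq\varnothing$. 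Then $C_f$ and $C_g$ are connected subsets of $\{x\colon g(x)>\delta/2\}$ that meet, so their union is a connected subset of $\{x\colon g(x)>\delta/2\}$ containing both $x_f$ and $x_g$. A connected set meeting both the open set $\mathrm{int}\,\supp(f)$ (at $x_f$) and the disjoint open set $K\setminus\supp(f)$ (at $x_g$) cannot be covered by these two sets, hence it contains a point $y\in\partial\supp(f)$.

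The contradiction is then immediate: such a $y$ lies in $\supp(f)$ and in $\{x\colon g(x)>\delta/2\}$, so $g(y)>\delta/2$, and since $f=g$ on $\supp(f)$ we obtain $f(y)>\delta/2>0$; continuity then makes $f$ positive on a neighbourhood of $y$, placing $y$ in $\mathrm{int}\,\supp(f)$ and contradicting $y\in\partial\supp(f)$. Therefore $C_f\cap C_g=\varnothing$, the family $\{C_f\}_{f\in F}$ consists of $|F|$ pairwise disjoint nonempty open sets, and $|F|\leqslant c(K)$. The main obstacle is exactly this disjointness step, where local connectedness is indispensable twice over: it guarantees that each $C_f$ is open, and, more crucially, it ensures that a connected set joining a point inside $\supp(f)$ to a point outside it must cross the boundary $\partial\supp(f)$, precisely where the defining identity $f=g$ of the relation $\prec$ yields the contradiction.
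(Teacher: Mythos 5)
Your proof is correct and follows essentially the same strategy as the paper's: both attach to each member of the chain a nonempty open set, show the resulting family is pairwise disjoint, and derive the key contradiction from a connected set that must cross the boundary of a support, where the identity $f=g$ on $\supp(f)$ built into $\prec$ breaks down. The only real difference is in where the work is placed --- the paper uses the sets $Z(F,f)$ themselves (disjoint for free) and invokes local connectedness to prove they are open, whereas you use connected components of the superlevel sets $\{x\colon f(x)>\delta/2\}$ (open for free by local connectedness) and instead prove disjointness directly.
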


\begin{proof}
 Let $F=\{f_i\}_{i\in I}\subset C(K)$ be a $\delta$-$\prec$-chain
for some $\delta>0$. The sets $Z(F,f_i)$ ($i\in I$) are non-empty
and pairwise disjoint. It is enough to prove that each $Z(F,f_i)$
($i\in I$) is open, as this will immediately yield the inequality
$|I|\leqslant c(K)$.

Fix $i\in I$ and $x\in Z(F,f_i)$; our aim is to prove that $x$ lies in the interior of $Z(F,f_i)$. Choose an open connected neighbourhood $U\subseteq K$ of $x$ such that $f(y)>\tfrac{1}{2}f(x)$ for each $y\in U$. We \emph{claim} that $U\cap \supp(f_j)=\varnothing$ for each $f_j\prec f_i$, which means that $U\subseteq Z(F,f_i)$. Suppose that this is not the case, that is, $U\cap \supp(f_j)\neq\varnothing$ for some $f_j\prec f_i$. Because $\supp(f_j)$ is the closure of the open set $V=\{w\in K\colon f_j(w)\neq 0\}$, there must be $y\in U\cap \supp(f_j)$ such that $f_j(y)\neq 0$. Since $U$ is connected, $x\in U\setminus \supp(f_j)$ and $y\in U\cap V$, the set $U$ intersects the boundary of $V$; let $z$ be an element of this intersection. Then, $z\in U$, so $f_i(z)\neq 0$. On the other hand, $z\in\supp(f_j)$, yet $z$ does not belong to $V$, so $0=f_j(z)=f_i(z)$, as $f_j\prec f_i$; a contradiction.\end{proof}
\begin{corollary}\label{souslin}
Every locally connected compact Hausdorff space which satisfies
 c.c.c.~also satisfies {\rm $(\symbishop)$}.\end{corollary}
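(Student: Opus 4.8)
The plan is to read off this corollary as a one-line consequence of Theorem \ref{cellular}. Recall that, by the definitions recorded in the introduction, the space $K$ satisfies $(\symbishop)$ precisely when, for every $\delta>0$, there is no uncountable $\delta$-$\prec$-chain in $C(K)$; and that $K$ satisfies c.c.c.\ exactly when its cellularity $c(K)$ is at most $\aleph_0$. So the corollary is really just the conjunction of these two reformulations fed into the cardinality bound of Theorem \ref{cellular}.

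First I would fix an arbitrary $\delta>0$ together with an arbitrary $\delta$-$\prec$-chain $F\subset C(K)$. Since $K$ is locally connected and compact Hausdorff, Theorem \ref{cellular} applies verbatim and yields $|F|\leqslant c(K)$. The c.c.c.\ hypothesis then forces $|F|\leqslant\aleph_0$, so $F$ is countable. As $\delta>0$ and $F$ were arbitrary, there is no uncountable $\delta$-$\prec$-chain in $C(K)$, which is exactly the statement that $K$ satisfies $(\symbishop)$.

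There is essentially no obstacle to overcome here: the entire substance of the argument was already discharged in Theorem \ref{cellular}, whose proof exhibited the sets $Z(F,f_i)$ as a pairwise disjoint family of \emph{open} sets indexed by the chain, thereby bounding its size by the cellularity. The only thing requiring any care is to verify that the two equivalences invoked above match the definitions given earlier precisely, so that the passage from Theorem \ref{cellular} to the corollary is genuinely immediate rather than merely plausible.
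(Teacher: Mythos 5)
Your proposal is correct and coincides with the paper's own (one-line) proof: the authors likewise treat the corollary as a direct reformulation of Theorem \ref{cellular} under the hypothesis $c(K)\leqslant\aleph_0$, using exactly the equivalence between $(\symbishop)$ and the absence of uncountable $\delta$-$\prec$-chains stated in the introduction. Nothing further is needed.
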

\begin{proof}
 This is a direct reformulation of Theorem \ref{cellular} for spaces
having countable cellularity.\end{proof}

 \begin{remark}\label{lcexamples}
 (i)
 Compact spaces which are connected need not be locally connected;
a~standard example of such a~space (which is also not
path-connected) is the so-called \emph{topologist's closed sine
curve}, that is, the graph of $\sin(1/x),\,x\in (0,1]$ with the
interval $\{0\}\times [-1,1]$ adjoined, endowed with the relative
Euclidean topology. Nevertheless, it is easily seen from the very
definition of the order topology that every connected linearly
ordered compact space is also locally connected.

Evidently, the unit interval satisfies the assumptions of Corollary
\ref{souslin}. The existence of other linearly ordered
connected (hence locally connected) examples of compact spaces
satisfying c.c.c.~is equivalent to the negation of the Souslin
hypothesis, $\mathsf{SH}$, which is consistent with and independent
of $\mathsf{ZFC}$. Namely, under $\neg\mathsf{SH}$, one may consider
the two-point compactification of a Souslin line (recall that
a~\emph{Souslin line} is a~non-separable, linearly ordered,
connected space without end-points which satisfies c.c.c.).
Consistently, there may exist non-homeomorphic Souslin lines.

 (ii)
 There are consistent examples of non-metrisable, locally connected
compact spaces which are hereditarily Lindel\"{o}f and hereditarily
separable (hence also c.c.c.) (see \cite{filipov} and \cite{kunen}).
Curiously, there may be no non-trivial convergent sequences in
spaces satisfying the assumptions of Corollary \ref{souslin} as
shown by van Mill under $\mathsf{CH}$ (\cite{vanmill}).
Assuming Jensen's diamond principle $\diamondsuit$, one has an
example of such a space which is moreover one-dimensional
 (\emph{cf}.~\cite{hakunen}).


We refer to \cite[Chapter b-11]{ency} for an exposition concerning local connectedness and further examples.
 \end{remark}

Corollary \ref{souslin} is optimal in the sense that there may exist uncountable $\prec$-chains, yet for any $\delta>0$ there may be no uncountable $\delta$-$\prec$-chains.
\begin{example}\label{cantor}
Let $\Delta\subseteq [0,1]$ be the ternary Cantor set. For each $d\in \Delta$ set $f_d\colon [0,1]\to [0,1]$ by $f_d(x) = \mbox{dist}(x, \Delta)\cdot\mathbf{1}_{[d,1]}(x)$. Each function $f_d$ ($d\in \Delta$) is continuous and the family $\{f_d\}_{d\in \Delta}$ forms an uncountable $\prec$-chain in $C[0,1]$. On the other hand, it is not hard to see that it is not a $\delta$-$\prec$-chain for any positive $\delta$.\end{example}

Now, we exhibit another example of a~compact, totally disconnected Hausdorff space which enjoys ($\symbishop$). This is a~well-known construction in point-set topology, where it is sometimes called a~{\it ladder system space}.

We equip the ordinal number $\omega_1$ with a~topology as follows:
\begin{itemize}
\item we declare zero and each countable successor number to be an isolated point;
\item for each non-zero limit ordinal $\lambda$ we choose a set $\{\alpha_{n,\lambda}\colon n<\omega\} \subseteq \lambda$ of order type $\omega$ consisting of successor numbers and cofinal in $\lambda$ (a \emph{ladder}); then we define basic open neighbourhoods of $\lambda$ to be of the form $U_{\lambda, m}=\{\lambda\}\cup\{\alpha_{n,\lambda}\colon n\geqslant m\}$ ($m<\omega$).
\end{itemize}
Of course, the topology on $\omega_1$ depends on the choice of
ladders but in any case the space $\omega_1$ topologised in this
manner is first countable, locally compact and Hausdorff.

\begin{theorem}\label{ladder}
Let $K$ be the one-point compactification of a ladder system space on~$\omega_1$, and let
$\delta>0$. Then every $\delta$-$\prec$-chain in $C(K)$ is countable. In particular, $K$ satisfies {\rm ($\symbishop$)}.
\end{theorem}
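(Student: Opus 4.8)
The plan is to show directly that, for every $\delta>0$, there is no uncountable $\delta$-$\prec$-chain in $C(K)$; by the definition of ($\symbishop$) this is exactly what the statement requires. Write $L$ for the ladder system space, so that $K=L\cup\{\infty\}$, and recall (as explained before Lemma~\ref{Zi}) that we may assume all functions in our chains are non-negative. Two structural facts about $K$ will drive the argument. First, every compact subset $C$ of $L$ is countable and has only finitely many non-isolated points: covering $C$ by the singletons of its isolated points together with one basic ladder neighbourhood $U_{\lambda,m}$ of each of its non-isolated points (which are necessarily limit ordinals, the successors being isolated), a finite subcover exhibits $C$ as a finite union of countable sets and shows that its non-isolated points are among the finitely many chosen $\lambda$'s. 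Second, if $f\in C(K)$ then any level set of $f$ excluding the value $f(\infty)$ is a compact subset of $L$; in particular, for $f$ with $f(\infty)=0$ the superlevel set $\{f\geqslant\delta\}$ is countable, and for $f$ with $f(\infty)=c>0$ the sublevel set $\{f\leqslant c/2\}$ is countable.

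Now let $F$ be an uncountable $\delta$-$\prec$-chain. As in the one-point-compactification case (Proposition~\ref{thm:omegaD}), split $F$ into $F_1=\{f\in F\colon f(\infty)=0\}$ and $F_2=\{f\in F\colon f(\infty)>0\}$; one checks that $f\prec g$ whenever $f\in F_1$ and $g\in F_2$, so $F_1$ is an initial and $F_2$ a final segment of $F$, and that all members of $F_2$ share a common value $c>0$ at $\infty$. For each $f\in F$ fix, using Lemma~\ref{Zi}, a witness $x_f\in Z(F,f)$ with $f(x_f)\geqslant\delta$; then $x_f\notin\supp(g)$ for every $g\prec f$, while $g(x_f)=f(x_f)\geqslant\delta$ for every $g\succ f$ (as $g=f$ on $\supp(f)\ni x_f$), and the witnesses are pairwise distinct since the sets $Z(F,f)$ are pairwise disjoint. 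For $f\in F_1$ and $g\prec f$ we have $x_g\in\{f\geqslant\delta\}$, a countable set, so $f$ has only countably many $\prec$-predecessors in $F_1$; dually, for $f\in F_2$ and $g\succ f$ we have $x_g\in\{f=0\}\subseteq\{f\leqslant c/2\}$, so $f$ has only countably many successors in $F_2$. Consequently every proper initial segment of $F_1$, and every proper final segment of $F_2$, is countable; a routine transfinite construction then shows that if $F_1$ (respectively $F_2$) were uncountable it would contain a $\prec$-increasing (respectively $\prec$-decreasing) chain of length $\omega_1$. It therefore suffices to rule out such monotone $\omega_1$-chains.

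Consider a $\prec$-increasing chain $\{f_\alpha\}_{\alpha<\omega_1}$ in $F_1$. Since the functions are non-negative and $f_\alpha=f_\beta$ on $\supp(f_\alpha)$ while $f_\alpha$ vanishes off its support, we have $f_\alpha\leqslant f_\beta$ pointwise for $\alpha<\beta$, so the countable compact sets $C_\alpha=\{f_\alpha\geqslant\delta\}$ increase with $\alpha$. Their finite sets $P_\alpha$ of non-isolated points then increase as well, and a non-decreasing map $\omega_1\to\omega$ (here $\alpha\mapsto\abs{P_\alpha}$) is eventually constant, so $P_\alpha$ stabilises to a finite set $P$ of limit ordinals. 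Writing $T_\lambda=\{\lambda\}\cup\{\alpha_{n,\lambda}\colon n<\omega\}$ for the full ladder of $\lambda$, the residual sets $C_\alpha\setminus\bigcup_{\lambda\in P}T_\lambda$ are finite (by the finite-subcover description of $C_\alpha$) and increasing, hence also eventually constant, equal to some finite set $E$. Thus there is $\alpha_1<\omega_1$ with $C_\alpha\subseteq G:=\bigcup_{\lambda\in P}T_\lambda\cup E$ for all $\alpha\geqslant\alpha_1$, where $G$ is a fixed countable set. But $x_\alpha\in C_\alpha\subseteq G$ for every $\alpha\geqslant\alpha_1$, and the $x_\alpha$ are pairwise distinct and uncountable in number---a contradiction. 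The case of a $\prec$-decreasing chain in $F_2$ is symmetric, using the increasing family of sublevel sets $\{f_\alpha\leqslant c/2\}$ in place of $\{f_\alpha\geqslant\delta\}$. Hence both $F_1$ and $F_2$ are countable, so $F$ is countable, as required.

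The heart of the argument, and the step I expect to be most delicate, is the last paragraph: extracting from the increasing families of (necessarily countable) level sets a single fixed countable set $G$ that eventually contains all of them. This rests on the special topology of the ladder system space---that compact subsets are countable with only finitely many non-isolated points, so that the ladder data is ``finite at each stage''---together with the elementary but crucial observation that a non-decreasing sequence of natural numbers indexed by $\omega_1$ must stabilise.
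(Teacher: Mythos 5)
Your proof is correct. The skeleton is the same as the paper's: you split $F$ into $F_1=\{f\colon f(\infty)=0\}$ and $F_2=\{f\colon f(\infty)>0\}$, observe that the relevant level sets $\{f\geqslant\delta\}$ (resp.\ $\{f\leqslant c/2\}$) are compact subsets of the ladder system space and hence countable, use the witnesses supplied by Lemma~\ref{Zi} to bound the number of predecessors (resp.\ successors), and reduce to ruling out a monotone chain of length $\omega_1$. Where you genuinely diverge is in the endgame. The paper kills the putative increasing $\omega_1$-chain by a club argument: it chooses ordinals $\beta_\alpha$, passes to a closed unbounded set $C$, picks $\gamma\in C$ with $C\cap\gamma$ of order type $\omega^2$, and shows that $\Upsilon_{f_\gamma}$ then fails to be covered by finitely many $L_\lambda^+$'s plus a finite set, i.e.\ fails to be compact --- a contradiction with the continuity of $f_\gamma$. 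You instead keep all the level sets compact and derive the contradiction from cardinality: using the sharper structural fact that a compact subset of the ladder space has only finitely many non-isolated points, you note that the increasing family $C_\alpha$ has increasing finite sets of non-isolated points and increasing finite residuals, both of which stabilise (a non-decreasing map $\omega_1\to\omega$ is eventually constant), so all $C_\alpha$ are eventually trapped in one fixed countable set $G$, which cannot contain the uncountably many pairwise distinct witnesses. Your route is arguably more elementary --- no clubs and no order-type computation with $\omega^2$ --- at the price of the slightly finer covering analysis (isolated versus non-isolated points of the $C_\alpha$) needed to make the stabilisation work; the paper's route localises the contradiction in a single function $f_\gamma$ whose level set is outright non-compact. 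One small point worth making explicit when you write this up: in the $F_2$ case the witness $x_{f_\alpha}$ need not lie in $D_\alpha=\{f_\alpha\leqslant c/2\}$ itself, but only in $D_\beta$ for every $\beta>\alpha$ (since $f_\beta$ vanishes at $x_{f_\alpha}$); as the $D_\beta$ are increasing and eventually contained in $G$, this still places every witness in $G$, so the symmetry claim is fine, just not literally word-for-word symmetric.
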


\begin{proof}
We model the proof after that of Proposition~\ref{thm:omegaD}:
we take a $\delta$-$\prec$-chain $F$ and divide it into two sets:
$F_1=\{f\in F\colon f(\omega_1)=0\}$ and $F_2=\{f\in F\colon f(\omega_1)>0\}$.

Let $L_\lambda$ denote the ladder associated to $\lambda$,  and set $L_\lambda^+=L_\lambda\cup\{\lambda\}$.
Note that a closed subset of~$\omega_1$ is compact if and only if it is covered by
finitely many sets of the form $L_\lambda^+$ and possibly a finite number of isolated points.

We \emph{claim} that $F_1$ is countable. For $f$ in $F_1$ let $\Upsilon_f=\{x\in\omega_1\colon f(x)\geqslant \delta\}$.
Each~$\Upsilon_f$ is compact, hence countable. Lemma \ref{Zi} implies that if $g\prec f$, then $Z(F,g)\cap \Upsilon_f\neq  \varnothing$. As the sets $Z(F,f)$ and $Z(F,g)$
are disjoint, this shows that $A(F,f)$ is countable for any $f\in F_1$.
Also, if $g\prec f$ in $F_1$, then $\Upsilon_g$ is a proper subset of~$\Upsilon_f$.

It suffices to show that $F_1$ has countable cofinality.
We assume, seeking of a contradiction, that
$\{ f_\alpha\colon \alpha<\omega_1\}$ is a strictly increasing sequence in $F_1$.
Then the sets $\Upsilon_{f_\alpha}$ form a strictly increasing sequence as well
so that the union $\bigcup_{\alpha<\omega_1}\Upsilon_{f_\alpha}$ is uncountable.

Given $\alpha<\omega_1$ let $\beta_\alpha>\alpha$ be such that
\begin{itemize}
\item $\bigcup_{\gamma\leqslant\alpha}\Upsilon_{f_\gamma}\subseteq \beta_\alpha$, and
\item there is a $\gamma<\beta_\alpha$ such that $x>\alpha$ for some
      $x\in \Upsilon_{f_\gamma}$.
\end{itemize}
Next let $C$ be a closed and unbounded set such that for each $\gamma\in C$ we have $\beta_\alpha<\gamma$ whenever $\alpha<\gamma$.

Let $\gamma\in C$ be such that $C\cap\gamma$ has order type~$\omega^2$.
We show that $\Upsilon_{f_\gamma}$ is not compact.
To this end, we note that, by the above two items there is a point in $\Upsilon_{f_\gamma}$
between $\eta$ and~$\xi$, whenever $\eta$ and~$\xi$ are consecutive elements of $C\cap\gamma$.
This shows that the order type of $\Upsilon_{f_\gamma}\cap\gamma$ is at least~$\omega^2$.

Let $H$ be a finite subset of~$\Lambda$.
We show that the set $\Upsilon_{f_\gamma}\setminus\bigcup_{\lambda\in H}L_\lambda^+$ is infinite.
This follows from the following three observations
\begin{itemize}
\item if $\lambda\in H\cap\gamma$, then $L_\lambda^+$ is bounded below by $\gamma$;
\item $\Upsilon_{f_\gamma}\setminus L_\gamma^+$ is cofinal in~$\gamma$
      (because of the order types of both sets); and
\item if $\lambda\in H$ is larger than~$\gamma$ then $L_\lambda^+\cap\gamma$
      is finite.
\end{itemize}
Thus we see that $\Upsilon_{f_\gamma}$ is not compact.

\smallskip
For $f\in F_2$ we consider the compact set $\Upsilon_f=\{x\in X\colon f(x)=0\}$.
Arguing similarly as before we conclude that $F_2$ is countable, yet the order of $F_2$ is reversed in the following sense: if $f\prec g$ in $F_2$,
then $\Upsilon_f\supseteq \Upsilon_g$ and there is $x\in \Upsilon_f\setminus \Upsilon_g$ such that $g(x)\geqslant\delta$.\end{proof}

A compact Hausdorff space is \emph{Eberlein} if it is homeomorphic
to a weakly compact subset of a Banach space. Countable compact
Hausdorff spaces, the one-point compactification of a discrete space
(Proposition \ref{thm:omegaD}) as well as the unit interval (\emph{cf}.~Remark \ref{lcexamples}~(i) 
 are classical examples of Eberlein compact spaces. A space $K$ is
Eberlein if and only if the Banach space $C(K)$ is weakly compactly
generated (\emph{cf}.~\cite[Theorem 2]{AL}). Every weakly
compactly generated Banach space is necessarily Lindel\"of in its
weak topology. Wage observed that no ladder system space $K$ is
Eberlein (\cite{wage}), while Pol proved that the Banach
space $C(K)$ is Lindel\"of in its weak topology (\cite{pol}).
In the light of these observations, we raise the following
question.
\begin{question}\label{eberleinbishop}
Does the class of compact Hausdorff spaces satisfying ($\symbishop$) contain all Eberlein compact
spaces? More generally, does every compact Hausdorff space $K$ for which the Banach space $C(K)$ is Lindel\"{o}f in its weak topology satisfy ($\symbishop$)?\end{question}

\section{The left ideal structure of operators satisfying ($\symbishop$)}
Let $[A]^2$ stand for the set of all $2$-element subsets of a~given
set $A$. The Dushnik and Miller partition lemma
 (\cite{dushnik_miller}) asserts that, for every infinite regular aleph
$\aleph_\alpha$, and any colouring $c\colon
[\aleph_\alpha]^2\to\{0,1\}$, at least one of the following
conditions holds true:
\begin{itemize*}
\item[(i)] there is a~set $A\subset\aleph_\alpha$ with $\abs{A}=\aleph_\alpha$ and $[A]^2\subset c^{-1}\{0\}$;
\item[(ii)] for every set $A\subset\aleph_\alpha$ with $\abs{A}=\aleph_\alpha$ there are $a\in A$ and a~set $B\subset A$ such that $\abs{B}=\aleph_\alpha$ and $\{a,b\}\in c^{-1}\{1\}$ for each $b\in B$.
\end{itemize*}

\begin{proposition}\label{dushnik_miller}
Let $K$ be a compact Hausdorff space and let $X$ be a~Banach space. If operators $T,U\colon C(K)\to X$ satisfy {\rm ($\symbishop$)}, then so does $T+U$.
\end{proposition}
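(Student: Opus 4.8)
The plan is to prove the statement directly: assuming that both $T$ and $U$ satisfy $(\symbishop)$, I want to verify the defining condition for $T+U$. So I fix an uncountable $\prec$-chain $F\subseteq C(K)$ with $\sup_{f\in F}\n{f}<\infty$ and fix $\e>0$; it then suffices to produce two distinct $f,g\in F$ with $\n{(T+U)f-(T+U)g}<3\e$, because letting $\e\to 0$ yields $\inf\{\n{(T+U)f-(T+U)g}\colon f,g\in F,\ f\neq g\}=0$. Passing to a subchain I may assume $|F|=\aleph_1$ and identify $F$ with $\aleph_1$ as an index set. The first real step is to introduce the colouring $c\colon[F]^2\to\{0,1\}$ defined by $c(\{f,g\})=0$ when $\n{Tf-Tg}\geqslant\e$ and $c(\{f,g\})=1$ otherwise.

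Next I would feed $c$ into the Dushnik--Miller lemma with $\aleph_\alpha=\aleph_1$. Clause (i) cannot hold: a set $A$ with $|A|=\aleph_1$ and $[A]^2\subseteq c^{-1}\{0\}$ would be an uncountable, uniformly bounded $\prec$-chain with $\n{Tf-Tg}\geqslant\e$ for all distinct $f,g\in A$, contradicting that $T$ satisfies $(\symbishop)$. Hence clause (ii) holds, and applying it to $A=F$ yields an element $a\in F$ together with an uncountable $B\subseteq F$ such that $\n{Ta-Tb}<\e$ for every $b\in B$. By the triangle inequality $\n{Tb_1-Tb_2}<2\e$ for all $b_1,b_2\in B$, so $B$ is an uncountable, uniformly bounded $\prec$-chain on which $T$ oscillates by less than $2\e$. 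To conclude, I would invoke the hypothesis on the \emph{other} operator: since $U$ satisfies $(\symbishop)$ and $B$ is an uncountable, uniformly bounded $\prec$-chain, we have $\inf\{\n{Ub_1-Ub_2}\colon b_1,b_2\in B,\ b_1\neq b_2\}=0$, so there are distinct $b_1,b_2\in B$ with $\n{Ub_1-Ub_2}<\e$. Combining the two estimates gives $\n{(T+U)b_1-(T+U)b_2}\leqslant\n{Tb_1-Tb_2}+\n{Ub_1-Ub_2}<3\e$, which is exactly the pair sought.

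The step I expect to be the crux is the correct use of clause (ii). The naive approach is to two-colour the pairs of $F$ according to which of $T,U$ carries at least half of the $(T+U)$-separation, and then to hunt for an uncountable monochromatic clique, which would contradict $(\symbishop)$ for the corresponding operator. This cannot succeed in general, since a Sierpi\'nski-type colouring witnesses $\aleph_1\not\to(\aleph_1)^2_2$, and neither colour need admit an uncountable homogeneous set; in particular clause (ii) does \emph{not} deliver a genuine monochromatic clique. The key observation is that clause (ii) nevertheless produces an uncountable set $B$ on which $T$ is only \emph{approximately} constant (to within $2\e$), and that approximate constancy is all one needs: the remaining slack is then absorbed by applying the hypothesis on $U$ to the single subchain $B$. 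Thus the essential idea is to use Dushnik--Miller solely to flatten one of the two operators on an uncountable subchain, deferring the actual separation to the other operator, rather than attempting to homogenise and separate simultaneously. A minor point to check is that every object produced along the way ($B$ and its members) inherits uncountability and the uniform norm bound from $F$, so that $(\symbishop)$ is legitimately applicable to $B$.
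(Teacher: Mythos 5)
Your proof is correct and takes essentially the same route as the paper's: both use the Dushnik--Miller dichotomy to rule out clause (i) via {\rm ($\symbishop$)} for one operator, use clause (ii) to extract an uncountable subchain on which that operator is approximately constant, and then apply {\rm ($\symbishop$)} for the other operator on that subchain. The only differences are cosmetic --- the paper argues by contraposition and colours pairs by the oscillation of $U$ rather than $T$, with thresholds $\delta/4$, $\delta/2$ in place of your $\e$, $2\e$, $3\e$.
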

\begin{proof}
Assume, in search of a contradiction, that there are $\delta>0$ and a~$\prec$-chain $\{f_i\}_{i\in I}$ in $C(K)$, where $I$ has cardinality $\aleph_1$, such that $\sup_{i\in I}\n{f_i}<\infty$ and \begin{equation*}\label{sum}\tag{*}\n{(T+U)(f_i)-(T+U)(f_j)}\geqslant\delta\quad(i,j\in I,\, i\not=j).\end{equation*} Set $$\mathcal{I}=\bigl\{\{i,j\}\in [I]^2:\,\n{U(f_i)-U(f_j)}\leqslant\delta/4\bigr\}$$
and consider the partition $[I]^2=\mathcal{I}\cup ([I]^2\setminus\mathcal{I})$.

If assertion (i) from the partition lemma holds (where $\alpha=1$ and $c^{-1}\{0\}=[I]^2\setminus\mathcal{I}$), then we get a~contradiction with the fact that $U$ satisfies ($\symbishop$).

If assertion (ii) is valid (where $c^{-1}\{1\}=\mathcal{I}$), then there is an $i_0\in I$ and a~set $B\subset I$ with $\abs{B}=\aleph_1$, such that $\{i_0,j\}\in\mathcal{I}$ for each $j\in B$. Then for every $j,k\in B$ we have $$\n{U(f_j)-U(f_k)}\leqslant\n{U(f_{i_0})-U(f_j)}+\n{U(f_{i_0})-U(f_k)}\leqslant\delta/2,$$hence our assumption (\ref{sum}) implies that $\n{T(f_j)-T(f_k)}\geqslant\delta/2$ for all $j,k\in B$, $j\not=k$, which contradicts the condition ($\symbishop$) for $T$.
\end{proof}

\begin{theorem}\label{left_ideal}
For every compact Hausdorff space $K$ the set of all operators $T\in \mathscr{B}(C(K))$ satisfying {\rm ($\symbishop$)} forms a~closed left ideal of the Banach algebra $\mathscr{B}(C(K))$.
\end{theorem}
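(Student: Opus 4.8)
Write $\mathscr{I}$ for the collection of operators in $\mathscr{B}(C(K))$ that satisfy {\rm($\symbishop$)}. The plan is to verify three things in turn: that $\mathscr{I}$ is a linear subspace, that $ST\in\mathscr{I}$ whenever $T\in\mathscr{I}$ and $S\in\mathscr{B}(C(K))$ is arbitrary, and that $\mathscr{I}$ is closed in the operator norm. The only substantial point, closure under addition, is already available as Proposition~\ref{dushnik_miller}, so the remaining work reduces to elementary estimates. For the linear-subspace claim I would note first that $0\in\mathscr{I}$ and that stability under non-zero scalars is immediate, since for $\lambda\neq0$ and any uniformly bounded uncountable $\prec$-chain $F$ one has $\inf\{\n{\lambda Tf-\lambda Tg}\colon f,g\in F,\ f\neq g\}=\abs{\lambda}\,\inf\{\n{Tf-Tg}\colon f,g\in F,\ f\neq g\}$; combining this with Proposition~\ref{dushnik_miller} shows $\mathscr{I}$ is a subspace.

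For the left-ideal property I would argue directly. Fix $T\in\mathscr{I}$ and $S\in\mathscr{B}(C(K))$, and let $F$ be any uniformly bounded uncountable $\prec$-chain. For distinct $f,g\in F$ we have $\n{STf-STg}=\n{S(Tf-Tg)}\leqslant\n{S}\,\n{Tf-Tg}$, whence $\inf\{\n{STf-STg}\colon f,g\in F,\ f\neq g\}\leqslant\n{S}\,\inf\{\n{Tf-Tg}\colon f,g\in F,\ f\neq g\}=0$, the last equality holding because $T\in\mathscr{I}$. Thus $ST$ satisfies {\rm($\symbishop$)}, i.e. $ST\in\mathscr{I}$.

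To see that $\mathscr{I}$ is norm-closed I would show that its complement is open. Suppose $T\notin\mathscr{I}$; then there is a uniformly bounded uncountable $\prec$-chain $F$ with $\delta:=\inf\{\n{Tf-Tg}\colon f,g\in F,\ f\neq g\}>0$, and I set $M=\sup_{f\in F}\n{f}$, which is finite and (as $F$ is uncountable, so contains a non-zero function) positive. If $S\in\mathscr{B}(C(K))$ satisfies $\n{S-T}<\delta/(4M)$, then for distinct $f,g\in F$,
\[
  \n{Sf-Sg}\geqslant\n{Tf-Tg}-\n{(S-T)(f-g)}\geqslant\delta-2M\n{S-T}>\frac{\delta}{2},
\]
so that $\inf\{\n{Sf-Sg}\colon f,g\in F,\ f\neq g\}\geqslant\delta/2>0$ and $S\notin\mathscr{I}$. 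Hence the open ball of radius $\delta/(4M)$ about $T$ misses $\mathscr{I}$, which is therefore closed.

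Assembling these three observations with Proposition~\ref{dushnik_miller} gives the theorem. I expect no genuine obstacle here: all the combinatorial difficulty has already been absorbed into Proposition~\ref{dushnik_miller} (via the Dushnik--Miller partition lemma), and the left-ideal and closedness parts reduce respectively to the elementary operator-norm estimates $\n{S(Tf-Tg)}\leqslant\n{S}\n{Tf-Tg}$ and $\n{Sf-Sg}\geqslant\n{Tf-Tg}-\n{S-T}\n{f-g}$, together with the characterisation of the failure of {\rm($\symbishop$)} for an operator as the existence of a uniformly bounded uncountable $\prec$-chain on which the relevant infimum is strictly positive.
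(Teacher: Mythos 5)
Your proposal is correct and follows essentially the same route as the paper: addition is delegated to Proposition~\ref{dushnik_miller}, the left-ideal property comes from the estimate $\n{S(Tf-Tg)}\leqslant\n{S}\,\n{Tf-Tg}$ (you state it contrapositively to the paper, and your version has the minor advantage of not dividing by $\n{S}$), and norm-closedness is the elementary perturbation argument that the paper leaves as ``evident'' but that you usefully spell out.
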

\begin{proof}
From the very definition of ($\symbishop$) it is evident that if $(T_n)_{n=1}^\infty\subset\mathscr{B}(C(K))$ norm converges to some $T\in\mathscr{B}(C(K))$ and each $T_n$ satisfies ($\symbishop$), then $T$ does as well.

Now, suppose that $T\in\mathscr{B}(C(K))$ satisfies ($\symbishop$) and let $S\in\mathscr{B}(C(K))$. If for some $\delta>0$ there were an~uncountable chain $\{f_i\}_{i\in I}$ with $\n{ST(f_i)-ST(f_j)}>\delta$ for all $i,j\in I$, $i\neq j$, then for all such $i,j$ we would also have $\n{T(f_i)-T(f_j)}>\delta/\n{S}$; a~contradiction. Finally, an~appeal to Proposition \ref{dushnik_miller} completes the proof.
\end{proof}

We have already noticed that for $K$ extremally disconnected the family of operators $T\colon C(K)\to C(K)$ which satisfy ($\symbishop$) coincides with
the two-sided ideal of weakly compact operators. Examples in Section 3 demonstrate that in some particular cases every operator on a $C(K)$-space may satisfy ($\symbishop$), which means that the left ideal of operators satisfying ($\symbishop$) is again a~two-sided ideal, the improper ideal
$\mathscr{B}(C(K))$. This raises the following open question.
\begin{question}
Is the set of operators on a $C(K)$-space which satisfy ($\symbishop$) always a right, and hence a two-sided ideal of $\mathscr{B}(C(K))$?\end{question}

\medskip
\noindent \textbf{Acknowledgements.}
The authors would like to express their gratitude to N.J. Laustsen  and J.M. Lindsay for careful reading of preliminary versions of this paper.

\bibliographystyle{amsplain}

\end{document}